\newcommand{\cal}{\mathcal}
\newcommand{\newsection}[1]
{\section{#1}\setcounter{theorem}{0} \setcounter{equation}{0}
\par\noindent}
\newtheorem{theorem}{Theorem}
\newtheorem{lemma}[theorem]{Lemma}
\newtheorem{corr}[theorem]{Corollary}
\newcommand{\R}{{\mathbb R}}
\newcommand{\Z}{{\mathbb Z}}
\newcommand{\ang}{{\not\negmedspace\nabla}}
\newcommand{\e}{\epsilon}
\newcommand{\rs}{{r^*}}
\renewcommand{\S}{{\mathbb S}}
\newcommand{\M}{\cal M}
\newcommand{\tv}{{\tilde{v}}}
\newcommand{\tphi}{{\tilde{\phi}}}
\newcommand{\ts}{{\tilde{s}}}
\newcommand{\weight}{{\Bigl(1-\frac{2M}{r}\Bigr)}}
\begin{document}

\title
{ Local energy estimate on Kerr black hole backgrounds }

\author{Daniel Tataru}
\author{Mihai Tohaneanu}

\address{Department of Mathematics, University of California,
  Berkeley, CA 94720-3840}

\address{Department of Mathematics, University of California,
  Berkeley, CA 94720-3840}

\thanks{Both authors were
  supported in part by NSF grant DMS0801261.}
\begin{abstract}
  We study dispersive properties for the wave equation in the Kerr
  space-time with small angular momentum. The main result of this
  paper is to establish uniform energy bounds and local energy decay
  for such backgrounds. This follows a similar result for the
  Schwarzschild space-time obtained in earlier work \cite{MMTT}
by the authors and collaborators.

\end{abstract}

\maketitle

\newsection{Introduction}

 The aim of this article is to study the decay properties of solutions
 to the wave equation in the Kerr space-time, which describes a
 rotating black hole. Until recently even the problem of obtaining 
uniform bounds for such solutions was completely open, and 
only some partial results were obtained in \cite{FKSY}. We also refer the reader to 
related independent work in \cite{DR3}.  However, the techniques used in 
these papers are of a different flavor, as they do not carry out such
a precise analysis of the dynamics near the trapped set.

Our aim here is to establish global in time energy bounds for the wave
equation in the Kerr space-time, as well as a local energy decay
estimate. These bounds apply in the full region outside the event
horizon, as well as in a small neighborhood on the inside of the
event horizon. 

The starting point in our analysis is the earlier work \cite{MMTT} of
the authors and collaborators, which establishes similar bounds for
the wave equation in the Schwarzschild space-time. The idea is to treat
the Kerr geometry as a small perturbation of the Schwarzschild
geometry, and then adapt the methods in \cite{MMTT}. Consequently in
this article we are only considering Kerr black hole backgrounds with
small angular momentum, which are close to the Schwarzschild space-time.
Nevertheless, we are confident that our methods will carry over also
to the case of large angular momentum.

Another goal of the earlier article  \cite{MMTT} was to establish
Strichartz estimates in the Schwarzschild space-time. We also aim to 
consider the similar problem for the Kerr space-time. However,
this requires very different technical tools, and will be considered
in a subsequent paper.

The local energy estimate in \cite{MMTT} is proved using the
multiplier method; the delicate issue there is to show that a suitable
multiplier can be found.  This method is quite robust under small
perturbations of the metric, and for the most part it easily carries
over to the Kerr backgrounds with small angular momentum.  There is
however one region where this does not apply, precisely near the
photon sphere $r=3M$ (which contains all\footnote{except of course for
  the rays along the event horizon, which are not relevant to this
  discussion} the trapped periodic geodesics in the Schwarzschild
space-time).  Hence most of the new analysis here is devoted 
to understanding what happens there.

The paper is organized as follows. In the next section we discuss the
classical local energy decay decay estimates in the Minkowski
space-time and small perturbation thereof.  Then we provide a brief
overview of the local energy estimates proved in \cite{MMTT} for the
Schwarzschild space-time, along with a discussion of the relevant
geometrical issues.  Finally, the last section contains the
description of the Kerr space-time and all the new results.
Our main local energy estimate is contained in Theorem~\ref{Kerr}.
This is complemented by higher order bounds in Theorem~\ref{highreg}.

\section{Local energy decay in the Minkowski space-time}

 In the Minkowski space-time $\R^{3+1}$, consider the wave equation with
 constant coefficients
 \begin{equation}
 \Box u = f , \qquad u(0)=u_0, \qquad \partial_t u(0)=u_1
 \label{boxinhom}\end{equation}
Here $\Box  = \partial_{t} ^2 - \Delta$.
More generally, let
\[
\Box_g = \frac{1}{\sqrt{-g}}\partial_{i} (\sqrt{-g}g^{ij}\partial_{j})
\]
be the usual d'Alembertian associated to a Lorentzian metric $g$.

The seminal estimate of Morawetz~\cite{M} asserts that for solutions
to the homogeneous equation $\Box u = 0$ we have the estimate
 \begin{equation}\label{Morawetz}
  \int_{\R} \int_{\R^3} \frac{1}{|x|}|\ang u|^2(t,x)\:dx\:dt 
+ \int_{\R} |u(t,0)|^2 dt \lesssim
 \|\nabla u_0\|_{L^2}^2 + \| u_1\|_{L^2}^2
\end{equation}
where $\ang$ denotes the angular derivative. This is obtained
combining energy estimates with the multiplier method.  The radial
multiplier $Q u = (\partial_r + \frac{1}r) u$ is used, where $r$
denotes the radial variable.

Within dyadic spatial regions one can control the full gradient
$\nabla u$, but the square summability wth respect to dyadic scales is
lost. Precisely,  we define the $L^2$ local energy norm
\begin{equation}
 \|u\|_{LE_M}= \sup_{j \in \Z}
2^{-\frac{j}2}\|u\|_{L^2 (\R \times \{|x|\in [2^{j-1},
2^j]\})}
\label{lem}\end{equation}
and its $H^1$ counterpart
\begin{equation}
\|u\|_{LE^1_M}=  \|\nabla_{x,t} u\|_{LE_M}+ 
\| |x|^{-1} u\|_{LE_M}
\label{lem1}\end{equation}
For the inhomogeneous term we use the dual norm
\begin{equation}
 \|f\|_{LE_M^*}= \sum_{k \in \Z}
2^{\frac{k}2}\|f\|_{L^2 (\R \times \{|x|\in [2^{k-1}, 2^k]\})}
\label{le*m}\end{equation}

Then we have the following scale invariant local energy estimate for
solutions $u$ to the inhomogeneous equation \eqref{boxinhom}:
\begin{equation}\label{localenergyflat}
\|\nabla u\|_{L^{\infty}_t L^2_x} + \| u\|_{LE^1_M}
 \lesssim \|\nabla u_0\|_{L^2} + 
\| u_1\|_{L^2} + \|f\|_{LE_M^*+L^1_t L^2_x}
\end{equation}
This is proved using a small variation of Morawetz's method, with
multipliers of the form $a(r) \partial_r + b(r)$ where $a$ is
positive, bounded and increasing.

There are many similar results obtained in the case of
perturbations of the Minkowski space-time; see, for example, \cite{M},
\cite{KSS}, \cite{KPV}, \cite{SmSo},\cite{St}, \cite{Strauss},
\cite{Al1}, \cite{MS}. Relevant to us is the case of small long range
perturbations of the Minkowski space-time, considered in \cite{MT}.
The metrics $g$ in $\R^{3+1}$ considered there satisfy
\begin{equation}
\sum_{k \in \Z} \sup_{|x|\in [2^{k-1}, 2^k]} 
| g(t,x) - g_{M}| + |x||\nabla_{x,t} g(t,x)|+|x|^2 |\nabla^2_{x,t} g(t,x)|
\leq \epsilon
\label{gm}\end{equation}
where $g_{M}$ stands for the Minkowski metric. Then as 
a special case of the results in \cite{MT} we have

\begin{theorem}{\cite{MT}}
Let $g$ be a Lorenzian metric in $\R^{3+1}$ which satisfies
\eqref{gm} with $\epsilon$ small enough. Then the solution
$u$ to the inhomogeneous problem
 \begin{equation}
 \Box u = f , \qquad u(0)=u_0, \qquad \partial_t u(0)=u_1
 \label{boxginhom}\end{equation}
satisfies the estimate \eqref{gm}.
\end{theorem}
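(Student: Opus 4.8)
The approach is to run the multiplier (energy--momentum) method directly on $\Box_g$, rather than quoting \eqref{localenergyflat} as a black box. Write $\Box_g u = f$ and note that the discrepancy $\Box_g-\Box$ is a second order differential operator whose coefficients are built from $g-g_M$, $\nabla_{x,t}g$ and $\nabla_{x,t}^2 g$ (the latter two entering through the $\frac{1}{\sqrt{-g}}\partial_i(\sqrt{-g}\,\cdot\,)$ structure and through the integration by parts used below) --- precisely the quantities made small by \eqref{gm}. Two estimates are combined: an energy estimate, obtained from the multiplier $\partial_t u$; and a Morawetz-type local energy estimate, obtained from radial multipliers of the form $a(r)\partial_r + b(r)$ with $a$ positive, bounded and increasing, exactly as in the unperturbed proof of \eqref{localenergyflat}. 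In both, the perturbation generates error terms which, thanks to the $|x|$- and $|x|^2$-weighted smallness of $\nabla_{x,t}g$ and $\nabla_{x,t}^2 g$, are controlled by $\epsilon$ times $\|u\|_{LE^1_M}^2$ and are absorbed on the left for $\epsilon$ small.

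In more detail: contracting the energy--momentum tensor of $u$ associated to $g$ with $\partial_t$ and integrating over a slab $[-T,T]\times\R^3$, the boundary terms reproduce $\|\nabla u(\pm T)\|_{L^2}^2$ and $\|\nabla u_0\|_{L^2}^2+\|u_1\|_{L^2}^2$ (with constants close to those in the flat case), the forcing contributes $\int f\,\partial_t u \lesssim \|f\|_{LE_M^*+L^1_tL^2_x}\bigl(\|\nabla u\|_{L^\infty_tL^2_x}+\|u\|_{LE^1_M}\bigr)$ by Cauchy--Schwarz and the $LE_M^*$--$LE_M$ duality, and the remaining bulk term is an expression of the form $O(\nabla_{x,t} g)\cdot|\nabla u|^2 = O(\epsilon|x|^{-1})|\nabla u|^2$, whose space-time integral is $\lesssim \epsilon\,\|u\|_{LE^1_M}^2$ --- the $|x|^{-1}$ weight being exactly what turns the $L^2_{t,x}$ mass on a dyadic shell into the squared $LE_M$ norm. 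For the local energy piece, for each dyadic scale $2^j$ one uses the multiplier $X_j u + b_j u$ with $X_j=a_j(r)\partial_r$ and $a_j$ transitioning around $r\sim 2^j$ (so $a_j'\sim 2^{-j}$ on $|x|\sim 2^j$), together with a zeroth order term $b_j$ playing the role of Morawetz's $1/r$; one gets $\div P = \Box_g u\,(X_j u + b_j u) + \mathcal Q_j[u]$ where $\mathcal Q_j[u]$ is a quadratic form in $\nabla u$ and $u/|x|$. The flat part of $\mathcal Q_j$ is, by the same elementary ODE positivity computation as in \eqref{localenergyflat}, bounded below by $c\,2^{-j}\bigl(|\nabla u|^2 + |x|^{-2}|u|^2\bigr)$ on $|x|\sim 2^j$ plus the always-nonnegative angular and $|u(t,0)|^2$ contributions, while the discrepancy between $\mathcal Q_j$ and $\div P$ computed with $g$ rather than $g_M$ --- after one integration by parts to trade the second derivatives of $u$ for derivatives falling on the coefficients --- is a sum of $O(\epsilon)$ multiples of quantities bounded by $\|u\|_{LE^1_M}^2$. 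The boundary terms at $t=\pm T$ are again dominated by the energy, and $\int \Box_g u\,(X_ju+b_ju)=\int f(X_ju+b_ju)$ by the $LE_M^*$--$LE_M$ duality plus the energy bound.

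Taking the supremum over $j\in\Z$ and adding the energy estimate yields
\[
\|\nabla u\|_{L^\infty_tL^2_x}+\|u\|_{LE^1_M}\ \lesssim\ \|\nabla u_0\|_{L^2}+\|u_1\|_{L^2}+\|f\|_{LE_M^*+L^1_tL^2_x}+\epsilon^{1/2}\|u\|_{LE^1_M},
\]
so for $\epsilon$ small the last term is absorbed. The step I expect to be the main obstacle is making this absorption rigorous: it is legitimate only once one knows a priori that $\|u\|_{LE^1_M}<\infty$. This is handled by a continuity/bootstrap argument in $T$ --- one carries out the whole scheme on finite slabs $[-T,T]$, on which all the norms are finite by the standard energy estimate and well-posedness for \eqref{boxginhom}, obtains the estimate with a constant independent of $T$, and then lets $T\to\infty$. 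A secondary technical point, rather than a genuine difficulty, is that the two-derivative weighted decay in \eqref{gm} is dictated exactly by this argument: after the integrations by parts above, every error coefficient must carry enough powers of $1/|x|$ to pair against $|\nabla u|^2$ or $|u|^2/|x|^2$ in the $LE_M$ norm, not merely against the non-summable energy density. Finally, there is no trapping obstruction here: in Minkowski the multiplier argument closes with no derivative loss, and the smallness of the perturbation keeps the geodesic flow of $g$ non-trapping, so no lower order correction to the multiplier is needed --- in contrast with the Schwarzschild and Kerr settings treated later in the paper.
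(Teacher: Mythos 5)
The paper does not prove this theorem --- it quotes it from \cite{MT} as a black box (with a typo in the statement: ``satisfies the estimate \eqref{gm}'' should read ``\eqref{localenergyflat}''), so there is no in-paper argument to compare against. The only hint the paper gives about the method is the remark that the flat estimate \eqref{localenergyflat} ``is proved using a small variation of Morawetz's method, with multipliers of the form $a(r)\partial_r + b(r)$.'' Your reconstruction --- running the same family of dyadic multipliers on $\Box_g$, dominating the bulk perturbative errors by $\epsilon\|u\|_{LE^1_M}^2$ using the $\ell^1$-in-dyadic-scales smallness built into \eqref{gm}, combining with the $\partial_t$-energy estimate, and closing via a continuity/bootstrap argument on finite slabs --- is exactly the standard way such small-long-range-perturbation results are established, and your bookkeeping of which weight pairs with which density ($|x|^{-1}$ against $|\nabla u|^2$, $|x|^{-2}$ against $|u|^2$, requiring the two-derivative decay of $g$) is correct and is the content behind hypothesis \eqref{gm}.

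Two cautions. First, the flat positivity step is stated a bit optimistically: the quadratic form produced by a single multiplier $a_j(r)\partial_r + b_j$ does \emph{not} by itself control $|\partial_t u|^2$ pointwise; what one directly gets with the right sign are $a_j'|\partial_r u|^2$, $\frac{a_j}{r}|\ang u|^2$, and a Lagrangian multiple of $\partial^\alpha u\,\partial_\alpha u$. The $\partial_t u$ piece of the $LE^1_M$ norm is recovered only after adding a suitable multiple of the $\partial_t$-energy identity (or equivalently, using the equation to trade $\partial_t^2 u$ for spatial second derivatives), so the claimed lower bound ``$c\,2^{-j}(|\nabla u|^2+|x|^{-2}|u|^2)$ on $|x|\sim 2^j$'' from $\mathcal Q_j$ alone needs a sentence rather than an appeal to ``the same elementary ODE positivity computation.'' Second, be aware that \cite{MT} proves a more general set of results (including the Strichartz estimates, for which it constructs a global parametrix) and organizes the proof somewhat differently than a bare multiplier computation; your sketch is adequate as a proof of the quoted local energy estimate, but do not conflate it with a reconstruction of the cited paper's actual argument.
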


No general such results are known for large perturbations, where on
one hand trapping for large frequencies and on the other hand
eigenvalues and resonances for small frequencies create major
difficulties. The Schwarzschild and Kerr metrics are such large
perturbations where trapping plays a major role.

\newsection{Local energy decay in the Schwarzschild space-time}

 In the original coordinates the Schwarzschild space-time is given as a metric whose line
element is (for $I = \R\times (2M,\infty)\times \S^2$)
\begin{equation}
ds^2=-\weight dt^2+\weight^{-1}dr^2+r^2d\omega^2
\label{swm}
\end{equation}
where $d\omega^2$ is the measure on the sphere $\S^2$ and $t,r$ are
the time, respectively the radius of the $\S^2$ spheres. This metric 
is well defined in two regions,
\[
I = \R\times (2M,\infty)\times \S^2, \qquad II = \R\times (0,2M)\times \S^2
\]
  Let $\Box_S$ denote the associated d'Alembertian.

 The singularity at $r=0$ is a true metric singularity. However, the 
  singularity at the event horizon $r=2M$ is an apparent singularity 
that can be removed by a different choice of coordinates. Following
\cite{HE}, let
\[
 r^*=r+2M\log(r-2M)-3M-2M\log M
\]
 and let $v=t+r^*$. In the new coordinates $(r,v,\omega)$ the metric
 becomes
\[
 ds^2=-\weight dv^2+2 dvdr+r^2d\omega^2
\]
and can be extended to a larger manifold $I\cup II$. Moreover, if
$w=t-r^*$, one can introduce global nonsingular coordinates (all the
way to $r=0$) by rewriting the metric in the Kruskal-Szekeres
coordinate system,
\[
v' = e^{\frac{v}{4M}}, \qquad w' = -e^{-\frac{w}{4M}}.
\]

There are two places where trapping occurs on the Schwarzschild
manifold.  The first is at the event horizon $r=2M$, where the trapped
geodesics are the vertical ones in the $(r,v,\omega)$ coordinates.
However, this family of trapped rays turns out to cause no difficulty
in the decay estimates since the energy decays exponentially along it
as $v \to \infty$. The second family of trapped rays occurs on the
surface $r=3M$ which is called the photon sphere.  Null geodesics
which are initially tangent to the photon sphere will remain on the
surface for all times. Unlike the previous case, the energy is
conserved for waves localized along such rays.  However, what makes
local energy decay estimates at all possible is the fact that the
trapped rays on the photon sphere are hyperbolic.

 The $(r,v,\omega)$ coordinates are nonsingular on the event
horizon, but have the disadvantage that the level sets of $v$ are
null surfaces. This is why it is more convenient to introduce
\[
 \tv = v - \mu(r)
\]
where $\mu$ is a smooth function of $r$. In the $(\tv,r,\omega)$
coordinates the metric has the form
\[
\begin{split}
ds^2=&\ -\weight d\tv^2 +2\left(1-\weight\mu'(r)\right) d\tv dr \\ &\ +
    \Bigl(2 \mu'(r) - \weight (\mu'(r))^2\Bigr)  dr^2  +r^2d\omega^2.
\end{split}
\]
On the function $\mu$ we impose the following two conditions:

(i) $\mu (r) \geq  \rs$ for $r > 2M$, with equality for $r >
{5M}/2$.

(ii)  The surfaces $\tv = const$ are space-like, i.e.
\[
\mu'(r) > 0, \qquad 2 - \weight \mu'(r) > 0.
\]
\noindent The first condition (i) insures that the $(r,\tv,\omega)$
coordinates coincide with the $(r,t,\omega)$ coordinates in $r
>{5M}/2$. This is convenient but not required for any of our
results. What is important is that in these coordinates the metric
is asymptotically flat as $r \to \infty$ according to \eqref{gm}.

Given $0 < r_e <2M$ we consider the wave equation 
\begin{equation}
 \Box_S u = f 
 \label{boxsinhom}\end{equation}
in the cylindrical region
\begin{equation}
 \M_{R} =  \{ \tv \geq 0, \ r \geq r_e \} 
\label{mr}\end{equation}
with initial data on the space-like surface
\begin{equation}
 \Sigma_R^- =  \M_{R} \cap \{ \tv = 0 \}
\label{mr-}\end{equation}
The lateral boundary of $\M_R$, 
\begin{equation}
 \Sigma_R^+ =   \M_R \cap \{ r = r_e\} 
\label{mr+}\end{equation}
is also space-like, and can be thought of as the exit surface
for all waves which cross the event horizon. 

We define the initial (incoming) energy on $\Sigma_R^-$ as 
\begin{equation}\label{energy1}
 E[u](\Sigma_R^-) =  \int_{\Sigma_R^-} \left( |\partial_r u|^2 +
|\partial_\tv u|^2    + |\ang u|^2 \right) r^2  dr  d\omega  
\end{equation}
the outgoing energy on $\Sigma_R^+$ as 
\begin{equation}\label{energy2}
 E[u](\Sigma_R^+) = \int_{\Sigma_R^+}
 \left(  |\partial_r u|^2 +   |\partial_\tv u|^2    +
|\ang u|^2 \right) r_e^2   d\tv d\omega
\end{equation}
and the energy on an arbitrary $\tv$ slice as
\begin{equation}\label{energy3}
 E[u](\tv_0) = \int_{ \M_R \cap \{\tv = \tv_0\}}
 \left(
|\partial_r u|^2 +   |\partial_\tv u|^2    + |\ang u|^2
\right) r^2  dr  d\omega
\end{equation}

The choice of the local energy norm $LE_S$ is inspired from
\eqref{lem}. However, there is a loss along  the trapped geodesics 
on the photon sphere. Consequently, we introduce  a
modified\footnote{notations are slightly changed compared to
  \cite{MMTT} in order to insure some uniformity across the 
three models described in the present paper}
$L^2$  local energy space
\begin{equation}\label{leS}
\| u\|_{LE_S} = \left\|  \left(1-\frac{3M}r \right) u\right\|_{LE_M}
\end{equation}
and $H^1$  local energy space
\begin{equation} \label{leS1}
\begin{split}
 \| u\|_{LE^1_S}  = \|\partial_r u\|_{LE_M} + \| \partial_{\tv}
 u\|_{LE_S} + \| \ang u\|_{LE_S} + \| r^{-1} u\|_{LE_M}
\end{split}
\end{equation}
For the inhomogeneous term we use the norm
\begin{equation} \label{leSstara}
\|f\|_{LE^*_{S}} = \left\|  \left(1-\frac{3M}r \right)^{-1} u\right\|_{LE_M}
\end{equation}
In the three formulas above we implicitely assume that all norms
are restricted to the set $\M_R$ where we study the wave equation
\eqref{boxsinhom}. Then we have the following result:
\begin{theorem}{\cite{MMTT}}\label{Schwarz}
 Let $u$ be so that $\Box_S u = f$.
 Then we have
\begin{equation}\label{main.est.Schw}
E[u](\Sigma_R^+) + \sup_{\tilde v} E[u](\tilde v) + \|u\|_{LE_S^1}^2
\lesssim E[u](\Sigma_R^-) + \|f\|_{LE^*_S}^2.
\end{equation}
\end{theorem}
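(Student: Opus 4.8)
\medskip
\noindent\textbf{Proof proposal.}
The plan is to obtain \eqref{main.est.Schw} by combining two identities: a vector field energy estimate, responsible for $E[u](\Sigma_R^+)+\sup_{\tv}E[u](\tv)$, and a Morawetz-type multiplier estimate, responsible for $\|u\|_{LE^1_S}^2$. Each will only hold modulo an error localized in a bounded range of $r$ --- the energy estimate leaks a quantity $\int_K|\nabla u|^2$ with $K$ a compact subset of $(2M,3M)$, while the multiplier estimate leaks boundary and commutator contributions near the event horizon --- and the two are then added with balanced weights so that each error is absorbed by the other's left-hand side, the inhomogeneous terms being handled by Cauchy--Schwarz at the end. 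One works with $\Box_S$ in the $(t,\rs,\omega)$ coordinates for $r$ bounded away from $2M$, and in the $(\tv,r,\omega)$ coordinates for $r$ near and below $2M$.

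For the energy estimate I would pair $\Box_S u=f$ with $Xu$, where $X$ is a future timelike vector field on $\M_R$ equal to the Killing field $\partial_t=\partial_\tv$ for $r\ge 5M/2$ and suitably modified for smaller $r$; such an $X$ exists precisely because the slices $\{\tv=\mathrm{const}\}$ and the exit surface $\Sigma_R^+=\{r=r_e\}$ are space-like. Integrating the resulting divergence identity over $\{0\le\tv\le\tv_0\}$ gives, on the left, $E[u](\tv_0)$ and the outgoing flux through $\Sigma_R^+$, and on the right $E[u](\Sigma_R^-)$, the spacetime integral of $f\,Xu$, and the bulk term from the deformation tensor of $X$, which is supported in $\{2M\le r\le 5M/2\}$, in particular away from the photon sphere. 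In the part of that range adjacent to $r=2M$ this bulk term has a favorable sign thanks to the red-shift (the positive surface gravity of the Killing horizon), and the remainder is the local energy quantity $\int_K|\nabla u|^2$. Taking $\sup$ over $\tv_0$ and absorbing into the left-hand side the portion of $\int f\,Xu$ paired with the energy gives $E[u](\Sigma_R^+)+\sup_\tv E[u](\tv)\lesssim E[u](\Sigma_R^-)+\|f\|_{LE^*_S}^2+\|u\|_{LE^1_S}\|f\|_{LE^*_S}+\int_K|\nabla u|^2$.

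For the multiplier estimate I would conjugate by $r$, setting $\tu=ru$, so that the spherical modes $\tu_\ell$ solve a $1{+}1$-dimensional Regge--Wheeler equation $\partial_t^2\tu_\ell-\partial_{\rs}^2\tu_\ell+V_\ell\tu_\ell=F_\ell$, with $V_\ell=\weight\bigl(\tfrac{\ell(\ell+1)}{r^2}+\tfrac{2M}{r^3}\bigr)$ and $F_\ell$ the $\ell$-th mode of $-r\weight f$. One then applies $Q\tu_\ell=a(\rs)\partial_{\rs}\tu_\ell+b(\rs)\tu_\ell$ with $a$ bounded, nondecreasing, and vanishing to first order at $\rs(3M)$, and $b$ a suitable perturbation of $a'/2$. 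Integration by parts yields bulk control of $\int a'|\partial_{\rs}\tu_\ell|^2$ and $\int\bigl(-\tfrac12 aV_\ell'-\tfrac14 a'''+\cdots\bigr)|\tu_\ell|^2$; since $\partial_r\bigl(\weight r^{-2}\bigr)=2r^{-4}(3M-r)$, the $\ell$-leading part of the zeroth-order term equals $a\,\weight\,r^{-4}(r-3M)\,\ell(\ell+1)\,|\tu_\ell|^2$, which is nonnegative exactly because $a$ changes sign at $r=3M$; summing in $\ell$ and undoing the conjugation this reads $\int\frac{a(r-3M)}{r^2}|\ang u|^2$, which near $r=3M$ behaves like $\int\bigl(1-\tfrac{3M}{r}\bigr)^2|\ang u|^2=\|\ang u\|_{LE_S}^2$ --- the unavoidable loss at the photon sphere, which is what forces the weight distinguishing $LE_S$ from $LE_M$. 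The derivative $\partial_\tv u$, not detected by $b=a'/2$, is recovered by taking instead $b=a'/2-\e\,g(r)$ with $g\ge 0$ and $g\sim(1-\tfrac{3M}{r})^2$ near $r=3M$: this gains $\e\int g\,|\partial_\tv u|^2$ with the correct weight, at the cost of a zeroth-order error $-\e\int g\,V_\ell|\tu_\ell|^2$ absorbed, for $\e$ small, by the angular term above. The term $\|r^{-1}u\|_{LE_M}$ follows from a Hardy inequality once $\partial_r u$ is controlled, and the passage from these fixed-shell bounds to the $LE_M$ norms with their supremum over dyadic scales is carried out, as in \cite{MT} and \cite{MMTT}, using a one-parameter family of such multipliers, one adapted to each dyadic region, since a single bounded increasing $a$ cannot satisfy $a'\gtrsim r^{-1}$ on every shell. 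For $r$ near and below $2M$ the analogous identity is run directly in $(\tv,r,\omega)$, with a multiplier $\tilde a(r)\partial_r+\cdots$, coupled to the red-shift term of the energy estimate.

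Finally, I would add a large multiple of the energy identity to the Morawetz identity. The error $\int_K|\nabla u|^2$ of the former is dominated by $\|u\|_{LE^1_S}^2$ of the latter, since on the compact set $K\subset(2M,3M)$, away from both $r=2M$ and the photon sphere, all weights in $LE^1_S$ are comparable to those in $LE^1_M$ and the Morawetz estimate controls $\nabla u$ there without loss; the horizon leakage of the latter cancels against the red-shift gain of the former; and the combined inhomogeneous term is $\lesssim\|f\|_{LE^*_S}^2+\|u\|_{LE^1_S}\|f\|_{LE^*_S}$, hence absorbed by Cauchy--Schwarz. I expect the principal obstacle to be the construction of the profile $(a,b)$ near the photon sphere: one must arrange the radial, time-derivative, angular, and zeroth-order bulk terms to be simultaneously nonnegative, with the only degeneracy a clean $(1-\tfrac{3M}{r})^2$ in the time-derivative and angular terms and nowhere worse. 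This is the analytic incarnation of the hyperbolicity of the trapped photon sphere, and the reason local energy decay survives there at all.
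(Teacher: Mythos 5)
Your proposal is essentially correct but follows a genuinely different route from the one the paper sketches (i.e., the \cite{MMTT} proof, recalled here in the course of proving Theorem~\ref{Kerr}). You conjugate to the Regge--Wheeler form, decompose into spherical harmonics, and run a $1{+}1$-dimensional multiplier $a(\rs)\partial_{\rs}+b(\rs)$ mode by mode; your computation of the $-\tfrac12 aV_\ell'$ bulk term and its sign at $r=3M$ is right, and after summing in $\ell$ and invoking Hardy this does recover the $LE_S^1$ norm. The paper instead works purely in physical spacetime via the energy-momentum tensor: Lemma~\ref{ibp} produces a single vector field $X = b(r)(1-\frac{3M}{r})\partial_r + c(r)K$, a scalar $q$, and a one-form $m$ near the horizon for which the bulk form $Q^S[u,X,q,m]$ is pointwise positive with exactly the advertised degeneracy, and then one integrates the divergence identity for $X+CK$ over $\{0<\tv<\tv_0,\ r>r_e\}$ in a single shot --- the energy and Morawetz multipliers are combined rather than run as two separate identities and added afterward. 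Both routes produce the intermediate weighted $L^2_{t,x}$ bound first; you pass to the dyadic-sup $LE^1_S$ norm via a one-parameter family of multipliers (the original \cite{MT} device), while the paper cuts away a bounded region and quotes the Minkowski perturbation result of \cite{MT} for the far field, which is close in spirit. The substantive distinction is strategic: the paper deliberately avoids spherical harmonic decomposition because it does not commute with $\Box_K$, and the whole architecture --- a vector field $X$, a scalar $q$, a one-form $m$, and a pointwise-positive $Q^S$ --- is built precisely so that it admits the pseudodifferential upgrade $X\mapsto X+aS$, $q\mapsto q+aE$ needed when the trapped set shifts into phase space for $a\ne 0$. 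Your route is perfectly adequate for Schwarzschild but would have to be abandoned wholesale for the Kerr argument that is the point of the paper.
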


Note that, compared to the norms $LE_M$, $LE_M^*$, the weights have 
an additional polynomial singularity at $r=3M$, but there are no
additional losses at the event horizon or near $\infty$. Furthermore,
by more refined results in \cite{MMTT}, this polynomial loss can be
relaxed to a logarithmic loss, i.e. the factor $1 - \frac{3M}r$ can be
improved to $|\ln (r-3M)|^{-1}$ near $r=3M$. This is related to the
fact that the (periodic) trapped rays on the photon sphere are
hyperbolic.

We also remark that in the expression of $LE_S^1$ it was sufficient to
measure $\partial_r u$. This is due to the implicit cancelation caused
by the fact that the symbol of the operator $\partial_r$ vanishes on
the trapped set.

The choice of $r_e \in (0,2M)$ is unimportant since the $r$-slices $r
= const \in (0,2M)$ are spacelike. Hence moving from one such
$r$-slice to another is equivalent to solving a local hyperbolic
problem, and involve no global considerations.  Thus in the proof of
the theorem one can assume without any restriction in generality that
$r_e$ is close to $2M$.

Local energy estimates were first proved in \cite{LS} for radially
symmetric Schr\"odinger equations on Schwarzschild backgrounds.  In
\cite{BS1, BS2, BSerrata}, those estimates are extended to allow for
general data for the wave equation.  The same authors, in
\cite{BS3,BS4}, have provided studies that give certain improved
estimates near the photon sphere $r=3M$.  Moreover, we note that
variants of these bounds have played an important role in the works
\cite{BSt} and \cite{DR}, \cite{DR2} which prove analogues of the
Morawetz conformal estimates on Schwarzschild backgrounds.

\newsection{Local energy decay in the Kerr space-time}

 The Kerr geometry in Boyer-Lindquist coordinates is given by
\[ 
ds^2 = g_{tt}dt^2 + g_{t\phi}dtd\phi + g_{rr}dr^2 + g_{\phi\phi}d\phi^2
 + g_{\theta\theta}d\theta^2 
\]
 where $t \in \R$, $r > 0$, $(\phi,\theta)$ are the spherical coordinates 
on $\S^2$ and 
\[
 g_{tt}=-\frac{\Delta-a^2\sin^2\theta}{\rho^2}, \qquad
 g_{t\phi}=-2a\frac{2Mr\sin^2\theta}{\rho^2}, \qquad
 g_{rr}=\frac{\rho^2}{\Delta}
 \]
\[ g_{\phi\phi}=\frac{(r^2+a^2)^2-a^2\Delta
\sin^2\theta}{\rho^2}\sin^2\theta, \qquad g_{\theta\theta}={\rho^2}
\]
with
\[ 
\Delta=r^2-2Mr+a^2, \qquad \rho^2=r^2+a^2\cos^2\theta. 
\]

 A straightforward computation gives us the inverse of the metric:
\[ g^{tt}=-\frac{(r^2+a^2)^2-a^2\Delta\sin^2\theta}{\rho^2\Delta},
\qquad g^{t\phi}=-a\frac{2Mr}{\rho^2\Delta}, \qquad
g^{rr}=\frac{\Delta}{\rho^2},
\]
\[ g^{\phi\phi}=\frac{\Delta-a^2\sin^2\theta}{\rho^2\Delta\sin^2\theta}
, \qquad g^{\theta\theta}=\frac{1}{\rho^2}.
\]

The case $a = 0$ corresponds to the Schwarzschild space-time.  We shall
subsequently assume that $a$ is small $a \ll M$, so that the Kerr
metric is a small perturbation of the Schwarzschild metric. We let
$\Box_K$ denote the d'Alembertian associated to the Kerr metric.

In the above coordinates the Kerr metric has singularities at $r = 0$
on the equator $\theta = \pi/2$ and at the roots of $\Delta$, namely
$r_{\pm}=M\pm\sqrt{M^2-a^2}$.  As in the case of the Schwarzschild
space, the singularity at $r=r_{+}$ is just a coordinate singularity,
and corresponds to the event horizon.  The singularity at $r = r_-$ is
also a coordinate singularity; for a further discussion of its nature,
which is not relevant for our results, we refer the reader to
\cite{Ch,HE}.  To remove the singularities at $r = r_{\pm}$ we
introduce functions $r^*$, $v_{+}$ and $\phi_{+}$ so that (see
\cite{HE})
\[
 dr^*=(r^2+a^2)\Delta^{-1}dr,
\qquad
 dv_{+}=dt+dr^*, 
\qquad 
 d\phi_{+}=d\phi+a\Delta^{-1}dr.
\]

The metric then becomes
\[
\begin{split}
ds^2= &\
-(1-\frac{2Mr}{\rho^2})dv_{+}^2+2drdv_{+}-4a\rho^{-2}Mr\sin^2\theta
dv_{+}d\phi_{+} -2a\sin^2\theta dr d\phi_{+} +\rho^2 d\theta^2 \\
& \ +\rho^{-2}[(r^2+a^2)^2-\Delta a^2\sin^2\theta]\sin^2\theta
d\phi_{+}^2
\end{split}
\]
which is smooth and nondegenerate across the event horizon up to but not including 
$r = 0$. Just like
in \cite{MMTT}, we introduce the function
\[
\tv = v_{+} - \mu(r)
\]
where $\mu$ is a smooth function of $r$. In the $(\tv,r,\phi_{+},
\theta)$ coordinates the metric has the form
\[
\begin{split}
ds^2= &\ (1-\frac{2Mr}{\rho^2}) d\tv^2
+2\left(1-(1-\frac{2Mr}{\rho^2})\mu'(r)\right) d\tv dr \\
 &\ -4a\rho^{-2}Mr\sin^2\theta d\tv d\phi_{+} + \Bigl(2 \mu'(r) -
 (1-\frac{2Mr}{\rho^2}) (\mu'(r))^2\Bigr)  dr^2 \\
 &\ -2a\theta (1+2\rho^{-2}Mr\mu' (r))\sin^2dr d\phi_{+} +\rho^2
 d\theta^2 \\
 &\ +\rho^{-2}[(r^2+a^2)^2-\Delta a^2\sin^2\theta]\sin^2\theta
d\phi_{+}^2.
\end{split}
\]

On the function $\mu$ we impose the following two conditions:

(i) $\mu (r) \geq  \rs$ for $r > 2M$, with equality for $r >
{5M}/2$.

(ii)  The surfaces $\tv = const$ are space-like, i.e.
\[
\mu'(r) > 0, \qquad 2 - (1-\frac{2Mr}{\rho^2}) \mu'(r) > 0.
\]
As long as $a$ is small, we can work with  the same 
function $\mu$ as in the case of the Schwarzschild space-time.

 For convenience we also introduce
\[
\tphi = \zeta(r)\phi_{+}+(1-\zeta(r))\phi
\]
where $\zeta$ is a cutoff function supported near the event horizon
and work in the $(\tv,r,\tphi, \theta)$ coordinates which are
identical to $(t,r,\phi,\theta)$ outside of a small neighborhood of
the event horizon.

Carter \cite{C} showed that the Hamiltonian flow is completely
integrable by finding a fourth constant of motion $K$ that is
preserved along geodesics. If $E$ and $L$ are the two constants of
motion associated with the Killing vector fields $\partial_t$ and
$\partial_\phi$, the equations for the null geodesics can
be reduced to the following (see, for example, \cite{Ch} or
\cite{Vol})
\begin{equation}
\begin{split}
 \rho^2 \dot{t} = &\ a(L-Ea\sin^2 \theta) + \frac{(r^2+a^2)((r^2+a^2)E-aL)}{\Delta}
\\
 \rho^2 \dot{\phi}= &\ \frac{L-Ea\sin^2 \theta}{\sin^2 \theta}+\frac{(r^2+a^2)aE-a^2 L}{\Delta}
\\
 \rho^4 \dot{\theta}^2=&\  K- \frac{(L-Ea\sin^2 \theta)^2}{\sin^2 \theta}
\\
 \rho^4\dot{r}^2 =&\  -K\Delta + ((r^2+a^2)E-aL)^2
\end{split}
\label{geoparam}\end{equation}
where the overdot denotes differentiation with respect to an affine
parameter $s$. This parametrization of the null geodesics is
nondegenerate away from the surfaces $r = r_{\pm}$.

Next we discuss the geometry of the trapped null geodesics.  The level
sets $r = r_0$ of $r$ are time-like for $r_0 > r_+$, null for $r =
r_+$ and space-like for $r_- < r_0 < r_+$. The latter implies that
there are no trapped null geodesics inside the region $\{r_- < r <
r_+\}$. On the null surfaces $r = r_{\pm}$, through each point there
is a unique null vector which is tangent and which generates a
trapped null geodesics.

To find the trapped null geodesics in the region $r > r_+$ it suffices
to consider the behavior of the fourth degree polynomial
\[
P(r) =  -K\Delta + ((r^2+a^2)E-aL)^2
\]
in the last equation in \eqref{geoparam}. At least one of the
parameters $E$, $K$ and $L$ should be nonzero, and the third equation
shows that $K \geq 0$ and that we cannot simultaneously have $E=K=0$.
Thus $P$ is always nondegenerate.  The key observation is that that
the simple zeroes of $P$ correspond to turning points in the last
equation, and only the double zeroes are steady states.  There are
several cases to consider.

a) If $E = 0$ then $K > 0$. Thus $P$ has at most one positive root,
where it changes sign from $+$ to $-$. This root is a right turning
point for the ode, and there are no trapped null geodesics.

b) $E \neq 0$. Then $P$ has degree $4$ and $P \geq 0$ in $[r_-,r_+]$.
If $P$ has any zero in $[r_-,r_+]$ then the square expression must
vanish, and this zero must be a double zero.  We claim that in $(r_+,
\infty)$ $P$ has either no root or two roots (counted with
multiplicity);  this is easily seen, as $P$ must have either (at least)
two complex conjugate roots or a negative root (the sum of the roots
equals $0$) and (at least) another one smaller than $r_-$ (since
$P(r_-)\geq 0$).  There are three subcases:

b1) $P$ has no roots larger than $r_+$. Then $r$ is monotone along
null geodesics, and there are no trapped null geodesics.

b2) $P$ has two distinct positive roots $ r_+ < r_1 < r_2$.  There it
must change sign from $+$ to $-$, respectively from $-$ to $+$.  Hence
$r_1$ is a right turning point and $r_2$ is a left turning point for
the ode. Thus  no trapped null geodesics exist.

b3) $P$ has a double positive real root $r_0$. Then this root is 
a steady state, and all other solutions converge to the steady   
state at one end, and escape to $0$ or infinity at the other end.

This analysis shows that the only trapped null geodesics are those 
along which $r$ is constant. The polynomial $P$ has a double 
root if the following two relations hold,
\[
((r^2+a^2)E-aL)^2 = K \Delta, \qquad 2rE ((r^2+a^2)E-aL) = K(r-M)
\]
which we rewrite in the form
\[
K = \frac{r^2 E^2 \Delta}{(r-M)^2},\qquad aL = E\left(r^2 + a^2   - \frac{2r
  \Delta}{r-M}\right)
\]
The right hand side in the $\dot{\theta}$ equation must be nonnegative.
Substituting in the above two relations we obtain a necessary
condition for the existence of trapped geodesics,
namely the inequality
\begin{equation}
 (2r\Delta - (r-M)\rho^2)^2\leq 4a^2 r^2\Delta\sin^2 \theta
\end{equation}
One can show that this condition is also sufficient.  The expression
on the left has the form
\[
2r\Delta - (r-M)\rho^2 = r^2(r-3M) + 2ra^2 - (r-M)a^2 \cos^2 \theta
\]
If $a=0$ then it has a single positive nondegenerate zero at $r=3M$,
which is the photon sphere in the Schwarzschild metric. Hence if $0 <
a \ll M$ it will still have a single zero which is close to $3M$. A 
rough computation leads to a bound of the form
\begin{equation}
|r-3M| \leq 2a, \qquad a \ll 2M
\label{closetoph}\end{equation}
Thus all trapped null geodesics lie within $O(a)$ of the 
$r = 3M$ sphere.

We would like a characterization of the aforementioned trapped
geodesics in the phase space.  Let $\tau, \xi, \Phi$ and $\Theta$ be
the Fourier variables corresponding to $t, r, \phi$ and $\theta$, and
\[
p( r, \phi,\tau, \xi, \Phi,\Theta)
=g^{tt}\tau^2+2g^{t\phi}\tau\Phi+g^{\phi\phi}\Phi^2
+g^{rr}\xi^2 +g^{\theta\theta}\Theta^2
\]
 be the principal symbol of $\Box_K$. On any null geodesic one has
\begin{equation}\label{Ham}
 p(t, r, \phi,\theta,\tau, \xi, \Phi,\Theta)=0.
\end{equation}
 
 Moreover, the Hamilton flow equations  give us
\begin{equation}\label{rdot}
 \dot{r}=-\frac{\partial p}{\partial \xi}=-\frac{2\Delta}{\rho^2} \xi
\end{equation}
\begin{equation}\label{xidot}
  \dot{\xi}=\frac{\partial p}{\partial r}= g^{tt}_{,r}\tau^2+2g^{t\phi}_{,r}\tau\Phi+g^{\phi\phi}_{,r}\Phi^2
  +g^{rr}_{,r}\xi^2 +g^{\theta\theta}_{,r}\Theta^2
\end{equation}
We rewrite the latter in the form
\begin{equation}\label{xidota}
  \rho^2 \dot{\xi}=\rho^2 \frac{\partial p}{\partial r}=
  - 2R_a(r,\tau,\Phi) \Delta^{-2} + \rho^2 \partial_r(\rho^{-2}) p + 2(r-M)\xi^2
\end{equation}
where 
\[
R_a(r,\tau,\Phi) = 
(r^2+a^2)(r^3-3Mr^2+a^2r+a^2M)\tau^2 - 2aM(r^2-a^2)\tau\Phi 
 - a^2(r-M)\Phi^2
\]
For geodesics with constant $r$, one needs to impose the additional
condition $\dot{r} = 0$. Hence from \eqref{rdot} either $r=r_{\pm}$,
which correspond to the geodesics at $r=2M$ in the Schwarzschild case,
or $\xi=0$. In the latter case  from \eqref{xidota} 
we obtain a polynomial equation for $r$, namely
\begin{equation}\label{Reqn}
 R_a(r,\tau,\Phi)= 0 
\end{equation}

Furthermore, due to \eqref{Ham} we must also have the inequality
\[
- ((r^2+a^2)^2-a^2\Delta\sin^2\theta)
\tau^2 - 2a Mr  \tau \Phi + \frac{\Delta-a^2\sin^2\theta}{\sin^2
  \theta} \Phi^2 \leq 0
\]
If $a$ is small and $r$ is as in \eqref{closetoph} this allows us to
bound $\Phi$ in terms of $\tau$,
\begin{equation}
|\Phi| \leq 4 M |\tau|
\end{equation}
For $\Phi$ in this range and small $a$ the polynomial 
$\tau^{-2} R_a(r,\tau,\Phi)$ can be viewed as a small perturbation of 
\[
\tau^{-2} R_0(r,\tau,\Phi) = r^4(r-3M) 
\]
which has a simple root at $r = 3M$. Hence for small $a$ the
polynomial $R_a$ has a simple root close to $3M$, which we denote by
$r_a(\tau,\phi)$. By homogeneity considerations and the implicit
function theorem we can further express $r_a$ in the form
\[
r_a(\tau,\Phi) = 3M \tilde r \left(\frac{a}{M}, \frac{\Phi}{M
    \tau}\right), \qquad \tilde r \in C^\infty(  [-\epsilon,\epsilon] \times [-4,4] )
\]
 Since $r_0(\tau,\Phi) = 3M$, it follows that
we can write $r_a(\tau,\Phi)$  in the form
\[
r_a(\tau,\Phi) = 3M + a F\left(\frac{a}{M}, \frac{\Phi}{M \tau}\right), 
 \qquad F \in C^\infty(  [-\epsilon,\epsilon] \times [-4,4] )
\]

The above analysis shows that the trapped null geodesics corresponding
to frequencies $(\tau,\Phi)$ are located at radial frequency $\xi = 0$
and position $r = r_a(\tau,\xi)$. One would be naively led to define
the local smoothing spaces associated to the Kerr space-time by
replacing the factor $r-3M$ in \eqref{leS} and \eqref{leSstara} with
the modified factor $r - r_a(\tau,\Phi)$. Unfortunately, this is no
longer a scalar function, but a symbol of a pseudodifferential
operator. In addition, this operator depends on the time Fourier
variable $\tau$, which is inconvenient for energy estimates on time
($\tv$) slabs.

Consequently, we replace the $r-r_a(\tau,\Phi)$ weight with a
polynomial in $\tau$ which has the same symbol on the characteristic
set $p = 0$. More precisely, for $r$ close to $3M$ we factor 
\[
p( r, \phi,\tau, \xi, \Phi,\Theta) = g^{tt}(\tau -\tau_1( r, \phi, \xi,
\Phi,\Theta)) (\tau -\tau_2( r, \phi, \xi,
\Phi,\Theta))
\]
where $\tau_1$, $\tau_2$ are real distinct smooth $1$-homogeneous symbols.
On the cone $\tau = \tau_i$ the symbol
$r-r_a(\tau,\phi)$ equals
\[
c_i ( r, \phi, \xi, \Phi,\Theta) = r- r_a(\tau_i,\Phi) = r-3M -a F\left(\frac{a}{M}, \frac{\Phi}{M \tau_i}\right), \qquad i = 1,2
\]
If $r$ is close to $3M$ and $|a| \ll M$ then on the characteristic set
of $p$ we have $|\phi| < 4M |\tau|$, therefore the symbols $c_i$ are
well defined, smooth and homogeneous. They are also nonzero outside 
an $O(a)$ neighborhood of $3M$. 

 We use the symbols $c_i$ to define associated microlocally weighted function spaces
$L^2_{c_i}$ in a neighborhood $I \times \S^2$ of $3M \times \S^2$
which does not depend on $a$ for small $a$. For 
functions $u$ supported in $I \times \S^2$ we set
\[
\| u\|_{L^2_{c_i}}^2 = \| c_i (D,x) u\|_{L^2}^2 + \|u\|_{H^{-1}}^2 
\]
There is an ambiguity in this notation as we have not specified the
coordinate frame in which we view $c_i$ as a pseudodifferential
operator. However, it is easy to see that different frames lead to
equivalent norms. We note that the low frequencies in $c_i$ are also
irrelevant, and can be removed with a suitable cutoff. After removing
the low frequencies, the quantization that we use for $c_i$ becomes
unimportant as well. We also define a dual norm $c_i L^2$ for 
functions $g$ supported in $I \times \S^2$, namely
\[
\| g\|_{c_i L^2}^2 = \inf_{c_i(x,D) g_1 + g_2 = g} (\|g_1\|_{L^2}^2 +
 \|g_2\|_{H^1}^2) 
\]

Since the symbols $c_i$ are nonzero outside an $O(a)$ neighborhood of 
$3M$, it follows that both norms $L^2_{c_i}$ and $c_i L^2$ are equivalent
to $L^2$ outside a similar neighborhood.

Now we can define local smoothing norms associated to the Kerr
space-time.  Let $\chi(r)$ be a smooth cutoff function which is
supported in the above neighborhood $I$ of $3M$ and which equals $1$ 
near $3M$. Then we set
\begin{equation}
\begin{split}
\|u\|_{LE_K^1} = & \|\chi(D_t - \tau_2(D,x))\chi u\|_{L^2_{c_1}} 
+ \|\chi (D_t - \tau_1(D,x))\chi u\|_{L^2_{c_2}} + \|(1-\chi^2) \partial_t  u\|_{LE_M}
\\ &\ +  \|(1-\chi^2) \ang  u\|_{LE_M}
+ \| \partial_r u\|_{LE_M} + \| r^{-1} u \|_{LE_M}
\end{split}
\label{leK}\end{equation}
We remark that this norm is equivalent to the Minkowski norm $LE_M^1$ 
outside an $O(a)$ neighborhood of $3M$, but it is degenerate on the trapped set.

For the nonhomogeneous term in the equation we define a dual structure,
\[
\|f\|_{LE_K^*} = \| (1-\chi) f\|_{LE_M^*} + \|\chi f\|_{c_1 L^2 + c_2 L^2}
\]

To state the main result of this paper we use the notations in
\eqref{mr}-\eqref{energy3}, with the parameter $r_e$ chosen so that
$r_- < r_e < r_+$:

\begin{theorem}\label{Kerr}
 Let $u$ solve $\Box_K u = f$ in $\M_R$. Then
\begin{equation}
 \|u \|_{LE_K^1}^2 + \sup_{\tilde v} E[u](\tilde v) + E[u](\Sigma_R^+)
 \lesssim E[u](\Sigma_R^-)+ \|f \|_{LE_K^*}^2.
\end{equation}
in the sense that the left hand side is finite and the inequality
holds whenever the right hand side is finite.
\end{theorem}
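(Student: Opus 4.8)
The plan is to superpose three estimates, following the architecture of \cite{MMTT}: a coercive energy estimate coming from a timelike vector field, a classical Morawetz-type multiplier estimate effective away from $r=3M$, and a microlocal positive commutator estimate that handles the hyperbolic trapping near $r=3M$. One first proves the three bounds on truncated regions $\M_R\cap\{\tv\le T\}\cap\{r\le R_0\}$, with constants independent of $T,R_0$; the large-$r$ region is controlled by the asymptotically flat estimate of \cite{MT}, since the metric satisfies \eqref{gm} there, and then one lets $T,R_0\to\infty$ to get the qualitative form of the statement. Note that near $r=3M$ one has $\partial_{\tv}=\partial_t$ and $\tphi=\phi$, since there $v_+=t+r^*$.

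For the energy estimate we contract the energy--momentum tensor of $u$ with a vector field $X$ equal to $\partial_{\tv}=\partial_t$ for $r\ge 5M/2$ but modified near the horizon so as to be everywhere timelike on $\M_R$ and to have an incoming flux through the spacelike surface $\Sigma_R^+=\{r=r_e\}$; this correction is needed because $\partial_t$, though Killing, fails to be timelike in the ergoregion, a slab of width $O(a^2)$ just outside $r=r_+$. Integrating the divergence of the associated current over $\M_R\cap\{0\le\tv\le T\}$ bounds $\sup_{\tv}E[u](\tv)+E[u](\Sigma_R^+)$ by $E[u](\Sigma_R^-)$, a bulk term supported in the compact region where $X$ is not Killing (near the horizon, hence away from $r=3M$), and the pairing of $f$ with $Xu$. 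The bulk term is absorbed by $\|u\|_{LE_K^1}^2$, which is nondegenerate there. The Morawetz estimate uses, exactly as in the Schwarzschild and Minkowski cases, a multiplier $a(r)\partial_r+b(r)$ with $a$ positive, bounded and increasing, plus lower order corrections and a small multiple of $\partial_{\tv}$; because the Kerr metric in $(\tv,r,\tphi,\theta)$ coordinates is a small perturbation of the Schwarzschild one, this yields a bulk term controlling the full nondegenerate norm $LE_M^1$ in any region with $r$ bounded away from $3M$ (and treating correctly the ends $r\to r_e$ and $r\to\infty$), with a sign-indefinite error supported in an $O(a)$ neighborhood of $r=3M$. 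Its boundary terms on $\tv$-slices and on $\Sigma_R^\pm$ are absorbed by $\sup_{\tv}E[u](\tv)$ and by $E[u](\Sigma_R^\pm)$.

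The core of the argument is the region near $r=3M$. Here one cannot merely perturb off the Schwarzschild estimate, since the trapped set sits at $\xi=0$, $r=r_a(\tau,\Phi)$, which moves by $O(a)$ while the Schwarzschild weight $1-3M/r$ stays bounded below. Instead one uses the factorization $\Box_K=g^{tt}(D_t-\tau_1(D,x))(D_t-\tau_2(D,x))$ modulo lower order operators, valid microlocally for $r$ near $3M$ and $|\Phi|\le 4M|\tau|$. Setting $v_i=\chi(D_t-\tau_j(D,x))\chi u$ for $\{i,j\}=\{1,2\}$, the equation $\Box_Ku=f$ becomes a first order transport equation $(D_t-\tau_i(D,x))v_i=\chi(g^{tt})^{-1}f+\mathcal{E}_i$, where $\mathcal{E}_i$ gathers commutator and lower order errors supported where the norm is nondegenerate or of lower order. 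For each such equation one runs a positive commutator argument with a self-adjoint pseudodifferential multiplier $B_i=b_i(D,x)$ chosen so that $i[\tau_i(D,x),B_i]$ is, modulo nonnegative and lower order terms, bounded below by $c_i(D,x)^*c_i(D,x)$ together with a nondegenerate elliptic term away from the trapped orbit; such $b_i$ exists, and can be built uniformly in small $a$ (reducing to the Schwarzschild multiplier at $a=0$), precisely because the trapped orbit is a single hyperbolic saddle of the spatial Hamilton flow of $\tau_i$ and the symbol $c_i$ is engineered to vanish there exactly to first order. Integrating the resulting commutator identity in $t$ bounds $\|c_i(D,x)v_i\|_{L^2}^2$, plus the nondegenerate part of $\|u\|_{LE_K^1}^2$ for $|r-3M|\gtrsim a$, by $\|\chi f\|_{c_1L^2+c_2L^2}^2$, by $\|\mathcal{E}_i\|$ in a nondegenerate norm, and by $t$-boundary terms estimated by $\sup_{\tv}E[u](\tv)$; the low frequency parts of everything are irrelevant and handled by the classical argument, which is why the definitions of $L^2_{c_i}$ and $c_iL^2$ carry the $H^{\pm1}$ corrections.

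Finally, adding the three estimates with a large multiple of the energy estimate and small multiples of the Morawetz and microlocal estimates closes the argument: the sign-indefinite Morawetz error near $3M$ is absorbed by the microlocal main term, the errors $\mathcal{E}_i$ and the nondegenerate shortfall of the microlocal estimate are absorbed by the Morawetz bulk, and all $\tv$-slice boundary terms are absorbed into a small fraction of $\sup_{\tv}E[u](\tv)$. I expect the main obstacle to be the microlocal step near $r=3M$: constructing $b_i$ with the correct degenerate positivity matching the weight $c_i$, controlling the dependence on quantization and coordinate frame, and verifying that all commutator errors and the matching with the classical multiplier region are genuinely lower order --- all uniformly as $a\to0$.
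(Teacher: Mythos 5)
Your overall architecture matches the paper's: energy estimate with a vector field that is timelike up to the horizon, a Schwarzschild-style Morawetz multiplier that is positive away from $r=3M$ and degenerate near it, a microlocal correction near $r=3M$, and the \cite{MT} perturbation argument at spatial infinity, all uniform as $a\to 0$. The genuine divergence is in the microlocal step. The paper keeps a single multiplier $X+CK+q+m+a(S+E)$, where $S$ and $E$ are pseudodifferential operators of first order in $\partial_{\tv}$, and the heart of the argument is Lemma~\ref{squaresum}: the combined symbol $\frac{1}{2i}\{p,X+as\}+p(q+ae)$ is shown to admit an explicit sum-of-squares decomposition $\sum_j\mu_j^2$ in which the $\mu_j$ are elliptically equivalent to $(c_2(\tau-\tau_1),c_1(\tau-\tau_2),\xi)$. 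This is precisely how the paper sidesteps the failure of Fefferman--Phong for systems: nonnegativity of the symbol alone would not suffice, but squaring the quantizations of the $\mu_j$ does. You instead factor $\Box_K=g^{tt}(D_t-\tau_1)(D_t-\tau_2)$ modulo lower order, set $v_i=\chi(D_t-\tau_j)\chi u$, and run a scalar positive-commutator (escape-function) argument on each first-order transport operator $D_t-\tau_i$ with a commutant $B_i$ whose commutator is bounded below by $c_i^*c_i$ plus nondegenerate terms. This is a legitimate alternative that also avoids Fefferman--Phong, by reducing to scalar first-order operators for which sharp G\aa rding suffices; it is arguably more conceptual, in that the hyperbolicity of the trapping and the vanishing of $c_i$ at the trapped orbit appear directly. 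What the paper's route buys is that one never leaves the multiplier framework (so the $\tv$-slice and lateral boundary terms are handled exactly as in \cite{MMTT} via \eqref{bdrposk}), the quantization subtleties (the weight $\rho/r$ needed to make $s^w$ self-adjoint in $L^2(dV_K)$, the placement of the cutoffs $\chi$ relative to the quantization) are dealt with once, and the sum-of-squares is entirely explicit and degenerates exactly to \eqref{sumsqS} at $a=0$. What your route buys is locality in phase space and a cleaner reduction to a one-dimensional hyperbolic fixed point.

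You should be aware that the construction of $B_i$ that you describe in one clause --- "such $b_i$ exists, and can be built uniformly in small $a$..." --- is the exact analogue of the paper's Lemma~\ref{squaresum} and is where all the work lives. In particular you need to (a) verify that the factorization errors $\Box_K-g^{tt}(D_t-\tau_1)(D_t-\tau_2)\in OPS^1$ really are absorbable, which requires the same $H^{-1}_{comp}$ elliptic bootstrap that the paper performs at the end of its proof; (b) control the commutators $[\chi,D_t-\tau_j]$, which are supported where $\chi'\neq0$ and hence need to match onto the nondegenerate Morawetz region with the correct weights; and (c) check that your escape function's commutator actually dominates $c_i^*c_i$ rather than only $c_i^*c_i+\xi^2$ modulo errors comparable to the main term, since $c_i$ vanishes only to first order in $r-r_a$ while the hyperbolic commutator naturally produces $(r-r_a)^2+\xi^2$. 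None of these is an obstruction, but each needs to be written out; as it stands, the proposal gives the correct architecture and a plausible alternative to the paper's key lemma without actually proving it.
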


The proof of the result uses the multiplier method. Part of the
difficulty is caused by the fact that, as shown in \cite{Al}, there is
no differential multiplier that provides us with a positive local
energy norm.  What we do instead is find a suitable pseudodifferential
operator that does the job. This is chosen so that its symbol vanishes
on trapped rays, which leads to a local energy norm which is degenerate
there.

As in the Schwarzschild case, the choice of $r_e \in (r_-,r_+)$ is
unimportant since the $r$-slices $r = const \in (r_-,r_+)$ are
spacelike.  Hence in the proof of the theorem one can assume without
any restriction in generality that $r_e$ is close to $r_+$.
\begin{proof}

The theorem is proved using a modification of the arguments in
\cite{MMTT}.  Let us first quickly recall the key steps in the proof
of Theorem \ref{Schwarz} as in \cite{MMTT}.  We begin with the
energy-momentum tensor
\[
Q_{\alpha\beta}[u]=\partial_\alpha u \partial_\beta u -
\frac{1}{2}g_{\alpha\beta}\partial^\gamma u \partial_\gamma u
\]
Its contraction with respect to a vector field $X$ is denoted by
\[
P_\alpha[u,X]=Q_{\alpha\beta}[u]X^\beta
\]
and its divergence is
\[
\nabla^\alpha P_\alpha[u,X] = \Box_g u \cdot Xu +
\frac{1}{2}Q_{\alpha\beta}[u]\pi^{\alpha\beta}
\]
where $\pi^{\alpha \beta} $ is the deformation tensor of $X$,
given by
\[
\pi_{\alpha\beta}=\nabla_\alpha X_\beta + \nabla_\beta X_\alpha
\]
A special role is played by the Killing vector field 
\[
K=\partial_{\tv}
\]
whose deformation tensor is zero.

Integrating the above divergence relation for a suitable choice of $X$
does not suffice in order to prove the local energy estimates, as in
general the deformation tensor can only be made positive modulo a
Lagrangian term of the form $q \partial^\alpha u \partial_\alpha u $.
Hence some lower order corrections are required.  For a vector field
$X$, a scalar function $q$ and a 1-form $m$ we define
\[
P_\alpha[u,X,q,m] = P_\alpha[u,X] + q u \partial_\alpha u - \frac12
\partial_\alpha q u^2 + \frac{1}{2}m_{\alpha}u^2.
\]
The divergence formula gives 
\begin{equation}
\nabla^\alpha P_\alpha[u,X,q,m] =  \Box_g u \Bigl(Xu +
 q u\Bigr)+ Q[u,X,q,m],
\label{div}\end{equation}
where
\[
 Q[u,X,q,m] = 
\frac{1}{2}Q_{\alpha\beta}[u]\pi^{\alpha\beta} + q
\partial^\alpha u\, \partial_\alpha u + m_\alpha u\,
\partial^\alpha u + (\nabla^\alpha m_\alpha -\frac{1}{2}
\nabla^\alpha \partial_\alpha q) \, u^2.
\]
So far these computations apply both for the Schwarzschild and the Kerr metrics.
From here one, we will use the sub(super)scripts $S$, respectively $K$ 
to indicate when a computation is perform with respect to one metric or another.

To prove the local energy decay in the Schwarzschild space-time,
$X$, $q$ and $m$ are chosen as in the following lemma:

\begin{lemma}
There exist a smooth vector field
\[
 X=b(r)(1-\frac{3M}{r})\partial_r + c(r)K
\]
with $c$ supported near the event horizon and  $b>0$ bounded
so that  
\[
|\partial_r^\alpha b| \leq c_\alpha r^{-\alpha}
\] 
a smooth function $q(r)$ with
\[
|\partial_r^\alpha q| \leq c_\alpha r^{-1-\alpha}
\]
and a smooth $1$-form $m$ supported near the
event horizon $r=2M$ so that

(i) The quadratic form $Q^S[u,X,q,m]$
is positive definite,
\begin{equation}
Q^S[u,X,q,m] \gtrsim r^{-2} |\partial_r u|^2 + \left(1-\frac{3M}r
\right)^2 (r^{-2} |\partial_\tv u|^2 + r^{-1}|\ang u|^2) + r^{-4}
u^2.
\label{posS}\end{equation}

(ii) $X(2M)$ points toward the black hole, $X(dr)(2M) < 0$, and
$\langle m,dr\rangle(2M) > 0$.

\label{ibp}
\end{lemma}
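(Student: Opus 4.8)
The plan is to construct the triple $(X,q,m)$ explicitly, guided entirely by the Schwarzschild choice whose existence is known, and then verify that the positivity \eqref{posS} together with the boundary conditions (ii) survive. First I would recall from \cite{MMTT} the specific Schwarzschild data $(X_S, q_S, m_S)$, with $X_S = b(r)(1-\tfrac{3M}{r})\partial_r + c(r) K$ and the stated derivative bounds on $b$ and $q$, and note that these are already tied to the Schwarzschild metric only through the quadratic form $Q^S[u, X_S, q_S, m_S]$. The whole point of writing $X$ in the form $b(r)(1-\tfrac{3M}{r})\partial_r + c(r) K$ — rather than writing it in the form $a(r)\partial_r + c(r)K$ — is that the factor $1-\tfrac{3M}{r}$ is precisely what makes the symbol of the radial part of $X$ vanish on the trapped set $r = 3M$, which is exactly what forces the degenerate weight $(1-\tfrac{3M}{r})^2$ on the right side of \eqref{posS}. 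So the lemma is really an existence claim internal to the Schwarzschild analysis, and the proof amounts to: (a) produce the vector field, (b) produce the function $q$, (c) produce the $1$-form $m$, (d) compute $\pi^{\alpha\beta}$ for this $X$, plug into $Q^S[u,X,q,m]$, and check the two items.

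The key computational step is to expand $Q^S[u,X,q,m]$ in the $(\tv, r, \omega)$ coordinates. Since $K = \partial_{\tv}$ is Killing, the $c(r)K$ part of $X$ contributes nothing to the deformation tensor, so it is only there to help with the event-horizon boundary term — it lets us arrange item (ii). The main term is the deformation tensor of $b(r)(1-\tfrac{3M}{r})\partial_r$; after contracting with $Q_{\alpha\beta}[u]$ one gets a quadratic form in $(\partial_r u, \partial_{\tv} u, \ang u)$ whose coefficients are explicit functions of $r$ involving $b$, $b'$, and the metric coefficients. Away from $r = 3M$ the coefficient of $|\partial_r u|^2$ is essentially $b'$ plus lower-order pieces, and the hypothesis that $b$ is positive, bounded, and (one arranges) increasing gives positivity of that coefficient; the angular and $\partial_{\tv}$ coefficients come out proportional to $b(1-\tfrac{3M}{r})$ times a positive factor, which is where the sign of $b$ matters and where the degeneracy at $r=3M$ is unavoidable and matches \eqref{posS}. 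The genuinely delicate point — and this is exactly the "delicate issue" flagged in the introduction — is getting the coefficient of $|\ang u|^2$ to be positive on both sides of $r = 3M$; naively the deformation tensor contribution there changes sign across the photon sphere, so one must add the Lagrangian correction $q\,\partial^\alpha u\,\partial_\alpha u$ with $q$ chosen so that its contribution $q \cdot (\text{null form})$ combines with the bad angular term. Using the equation $\Box_S u = f$ to trade the null form $\partial^\alpha u \partial_\alpha u$ is not available here since this is a pointwise algebraic lemma about $Q$, so instead $q$ must be chosen so that the combined coefficient, after also using the $u^2$ terms $(\nabla^\alpha m_\alpha - \tfrac12 \Box_S q)u^2$, is a perfect positive combination; this is a one-dimensional ODE-type argument for $q$ on an interval around $r = 3M$, and the derivative bound $|\partial_r^\alpha q| \lesssim r^{-1-\alpha}$ is what one reads off from the construction.

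I would handle the boundary conditions (ii) last and essentially by inspection: at $r = 2M$ we have $1 - \tfrac{3M}{r} = 1 - \tfrac32 = -\tfrac12 < 0$, so the radial part $b(2M)(-\tfrac12)\partial_r$ already points inward ($X(dr)(2M) < 0$) provided $b(2M) > 0$, which is part of the hypothesis; the $c(r)K$ term is timelike-ish and can be tuned so it does not spoil this, and one chooses the $1$-form $m$ supported near $r = 2M$ with $\langle m, dr\rangle(2M) > 0$, which is a free choice since $m$ only needs the divergence structure near the horizon. The main obstacle, then, is step (b)–(c): producing $q$ (and the accompanying $m$ near the horizon) so that the full quadratic form is positive definite with exactly the weights in \eqref{posS}, uniformly across the photon sphere, while keeping the stated symbol-type decay in $r$. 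The large-$r$ and near-horizon regions are comparatively routine — at infinity the metric is asymptotically flat in the sense of \eqref{gm} and the classical Morawetz multiplier $a(r)\partial_r + b(r)$ from \eqref{localenergyflat} works verbatim, and near the horizon the slices $r = \text{const}$ are spacelike so positivity is not an issue — so the real work is confined to a fixed compact $r$-interval around $3M$, where it reduces to an explicit finite-dimensional positivity computation that one verifies by choosing the one-variable functions $b$ and $q$ appropriately.
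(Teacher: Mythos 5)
This lemma is not proved in the present paper: it is stated as a recollection of the construction in \cite{MMTT}, and the only piece of it that the paper computes explicitly is the symbol-level identity \eqref{sqss}--\eqref{sumsqS}, which records the \emph{consequences} of the lemma rather than deriving it. So there is no ``paper proof'' to compare against, and your proposal is necessarily measured against the symbol computation \eqref{sqss} and the discussion surrounding it.

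Judged on that basis, your high-level framework is right: the $1-\tfrac{3M}{r}$ factor is dictated by the trapped set, the Killing part $c(r)K$ contributes nothing to the deformation tensor and exists only to fix the boundary term (ii), the $1$-form $m$ is a free near-horizon patch, and at $r=2M$ the sign $1-\tfrac{3M}{r}=-\tfrac12<0$ immediately gives $X(dr)(2M)<0$. You also correctly identify that the genuinely delicate step is balancing the Lagrangian correction $q$ against the angular coefficient near $r=3M$. But two things keep this from being a proof rather than a plan. First, you never actually produce $b$ and $q$; you gesture at ``choose $b$ positive, bounded, increasing,'' which is an extra hypothesis not in the lemma, and which is also not the right mechanism: from \eqref{sqss}, the coefficient of $\xi^2$ near $r=3M$ is $\beta_S^2(r) = \tfrac{3M}{r^2}b(r^2-2Mr) + (1-\tfrac{3M}{r})\bigl(b'(r^2-2Mr)-b(r-M)\bigr)$, and at $r=3M$ the second term vanishes and positivity comes from $\tfrac{3M}{r^2}b(r^2-2Mr)>0$, i.e.\ from $b>0$ and the metric, not from monotonicity of $b$. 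Monotonicity enters, if at all, only in controlling $\beta_S^2$ uniformly away from $3M$, and the constraints linking $b$, $b'$, $q$, and the weight $(1-\tfrac{3M}{r})^2$ over the whole range of $r$ are exactly the nontrivial part of the \cite{MMTT} construction that your sketch leaves unresolved. Second, the lemma also asserts the lower-order bound $q^{S,0}=-\tfrac12\nabla^\alpha\partial_\alpha q>0$ (this is the $r^{-4}u^2$ term in \eqref{posS}), which is an independent sign condition on $q$; you fold it silently into ``a one-dimensional ODE-type argument,'' but it is a genuine additional constraint that must be checked against the choice of $q$ forced by the angular balancing. So: right skeleton, but the core construction is asserted rather than carried out, and the attribution of the $|\partial_r u|^2$ positivity to $b'$ is inaccurate at the one point where it matters.
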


The local energy estimate is obtained by integrating the divergence
relation \eqref{div} with $X+CK$ instead of $X$, where $C$ is a large
constant,  on the domain 
\[
D = \{ 0 < \tv < \tv_0,\ r > r_e \}
\]
with respect to the measure induced by the metric, $dV_S = r^2 dr d\tv
d\omega$.  This yields
\begin{equation}
\int_D Q^S[u,X,q,m]  dV_S = 
- \int_D \Box_S u \Bigl((X+CK)u + q u\Bigr) dV_S
+ BDR^S[u]
\label{intdiv}\end{equation}
where $BDR^S[u]$ denotes the boundary terms
\[
BDR^S[u] = \left. \int \langle d\tv, P[u,X + C K,q,m]\rangle
  r^2 dr  d\omega \right|_{\tv = 0}^{\tv = \tv_0}
 \!\!  - \!  \int \langle dr,  P[u,X + C K,q,m]\rangle
  r_e^2 d\tv  d\omega 
\]
Using the condition (ii) in the Lemma and Hardy type inequalities,
it is shown in \cite{MMTT} that for large  $C$ and $r_e$ close 
to $2M$ the boundary terms have the correct sign,
\begin{equation}
  BDR^S[u]  \leq c_1 E[u](\Sigma_R^-) - 
c_2 (E[u](\tv_0) + E[u](\Sigma_R^+)), \qquad c_1, c_2 > 0
\label{bdrpos}\end{equation}
Consequently, by applying the Cauchy-Schwartz inequality 
for the first term on the right of \eqref{intdiv} we obtain 
a slightly weaker form of the local energy estimate 
\eqref{main.est.Schw}, namely 
\begin{equation}\label{main.est.Schwa}
E[u](\Sigma_R^+) + \sup_{\tilde v} E[u](\tilde v) + \|u\|_{LEW_S^1}^2
\lesssim E[u](\Sigma_R^-) + \|f\|_{LEW^*_S}^2.
\end{equation}
where the weaker norm $LEW_S^1$ and the stronger norm $LEW^*_S$
are defined by 
\[
\| u\|_{LEW_S^1}^2 = \int_{\M_R} \left(
 r^{-2} |\partial_r u|^2 + \left(1-\frac{3M}r
\right)^2 (r^{-2} |\partial_\tv u|^2 + r^{-1}|\ang u|^2) + r^{-4}
u^2 \right) r^2 dr d\tv d\omega
\]
respectively
\[
\| f\|_{LEW_S^*}^2 =  \int_{\M_R}  r^2  \left(1-\frac{3M}r
\right)^{-2} f^2 r^2 dr d\tv d\omega
\]
These norms are equivalent with the stronger norms $LE_S^1$,
respectively $LE^*_S$ for $r$ in a bounded set. On the other hand for
large $r$ the Schwarzschild space can be viewed as a small perturbation
of the Minkovski space. Thus the transition from
\eqref{main.est.Schwa} to \eqref{main.est.Schw} is achieved in
\cite{MMTT} by cutting away a bounded region and then using a
perturbation of a Minkowski space estimate. This part of the proof
translates without any changes to the case of the Kerr space-time.
Our goal in what follows will be to establish the Kerr counterpart
of \eqref{main.est.Schwa}, namely 
\begin{equation}\label{main.est.Kerr}
E[u](\Sigma_R^+) + \sup_{\tilde v} E[u](\tilde v) + \|u\|_{LEW_K^1}^2
\lesssim E[u](\Sigma_R^-) + \|f\|_{LEW^*_K}^2.
\end{equation}
where the norms $LEW_K^1$, respectively $LEW^*_K$ coincide
with $LE_K^1$, respectively $LE^*_K$ for bounded $r$, and with
$LEW_S^1$, respectively $LEW^*_S$ for large $r$. More precisely,
if $\chi(r)$ is a smooth compactly supported cutoff function which
equals $1$ say for $r < 4M$ then we set
\[
\| u\|_{LEW_K^1}^2 =
 \| \chi u\|_{LE_K^1}^2 + \|(1-\chi) u\|_{LEW_S^1}^2 
\]
respectively
\[
\| u\|_{LEW_K^*}^2 =
 \| \chi u\|_{LE_K^*}^2 + \|(1-\chi) u\|_{LEW_S^*}^2 
\]
Different choices for $\chi$ lead to different but equivalent norms.

It is useful to first consider the effect of the same multiplier in
the Kerr metric. The two metrics are close when measured in the 
same euclidean frame $x = r \omega$ with $r \geq r_e$. Precisely, with
$\partial$ standing for $\partial_t$ and $\partial_x$, $x = r \omega$,

\begin{equation}\label{aproxmet}
 |\partial^\alpha [(g_K)_{ij}-(g_{S})_{ij}]|\leq c_\alpha
 \frac{a}{r^{2+|\alpha|}}, \qquad 
 |\partial^\alpha [(g_K)^{ij}-(g_{S})^{ij}]|\leq c_\alpha
 \frac{a}{r^{2+|\alpha|}}
\end{equation}
From this and the size and regularity properties of $X$, $q$ and $m$
it follows that 
\begin{equation}
|P_\alpha^S[u,X,q,m] -P_\alpha^K[u,X,q,m]| \lesssim \frac{a}{r^2}
|\nabla u|^2 
\label{pdiff}\end{equation}
respectively 
\begin{equation}
|Q^S[u,X,q,m] -Q^K[u,X,q,m]| \lesssim a \left( \frac{1}{r^2}
|\nabla u|^2 + \frac{1}{r^4} |u|^2\right)
\label{qdiff}\end{equation}
Hence integrating the divergence relation \eqref{div} in the Kerr
space-time over the same domain $D$  but with respect to the Kerr
induced measure $dV_K=\rho^2 dr d\tv d\omega$ we obtain
\begin{equation}
\int_D Q^K[u,X,q,m] dV_K = 
- \int_D \Box_K u \Bigl((X+CK)u + q u\Bigr) dV_K
+ BDR^K[u]
\label{intdivk}\end{equation}
The bound \eqref{pdiff} shows that for small $a$ the boundary terms
retain their positivity properties in \eqref{bdrpos}, namely
\begin{equation}
  BDR^K[u]  \leq c_1 E[u](\Sigma_R^-) - 
c_2 (E[u](\tv_0) + E[u](\Sigma_R^+)), \qquad c_1, c_2 > 0
\label{bdrposk}\end{equation}
However, \eqref{qdiff} merely shows that 
\begin{equation}
Q_K[u,X,q,m] \gtrsim r^{-2} |\partial_r u|^2 + \left[\left(1-\frac{3M}r
\right)^2-Ca\right] (r^{-2} |\partial_\tv u|^2 + r^{-1}|\ang u|^2) + r^{-4}
u^2
\label{qkbd}\end{equation}
and the right hand side is no longer positive definite near $r = 3M$.
Thus we cannot close the argument as in the Schwarzschild case.  As
shown in \cite{Al}, changing the vector field $X$ near $r = 3M$ would
not help.

To remedy this, we need to use a pseudodifferential modification $S$
of the vector field $X$. We will choose $S$ so that its kernel is
supported in a small neighborhood of $(3M,3M)$; this insures that
there will be no additional contributions at $r = r_e$.  Furthermore,
in order to be able to carry out the computations near the initial and
final surfaces $\tv = 0,\tv_0$ we take $S$ to be a first order differential
operator with respect to $\tv$.  Similarly, we modify the Lagrangian 
factor $q$ using a pseudodifferential correction $E$, which is also
a first order differential operator with respect to $\tv$. 

We also need to choose a quantization which is consistent with the
Kerr measure. Here we have a few choices which have equivalent
results. For our selection we use euclidean-like coordinates $x =
\omega r$.  Given a real symbol $s$ its euclidean Weyl quantization
$s^w$ is selfadjoint with respect to the euclidean measure $dV = r^2
dr d\omega$. However, in our case we need to work with the Kerr
induced measure $dV_K = \rho^2 dr d\omega$. Hence we slightly abuse
notations and redefine the Weyl quantization as 
\[
s^w:= \frac{r}{\rho} \ s^w\  \frac{\rho}{r}
\]
If $s$ is a real symbol, then $s^w$ (re)defined above is a selfadjoint
operator in $L^2(dV_K)$. 

Another issue which does not affect our analysis but needs to be
addressed is that we are using pseudodifferential operators in an
exterior domain $\{ r > r_e\}$ and some care must be given to what 
happens near $r = r_e$. To keep things simple, in what follows 
 all operators we work with are compactly supported in the sense 
that their kernels are supported away from $r_e$ and infinity; even
better, supported in a small neighborhood of $3M$.

 In what follows we consider a
skewadjoint pseudodifferential operator $S$ and a selfadjoint
pseudodifferential operator $E$ of the form \footnote{Since we are away from the 
event horizon the variable $\tv$ coincides with $t$. We make this substitution here and later.}
\[
S =  i s_1^w + s_0^w \partial_t, \qquad E =  e_0^w 
+ \frac{1}i  e_{-1} \partial_t
\]
where $s_1 \in S^1$, $s_0,e_0 \in S^0$ and $e_{-1} \in S^{-1}$ are
real symbols, homogeneous outside a neighborhood of $0$.  Commuting
and integrating by parts we obtain the counterpart of the relation
\eqref{intdivk}, namely
\begin{equation}
 IQ^K[u,S,E]  = - \Re \int_D \Box_K u \cdot
(S+E) u dV_K + BDR^K[u,S,E]
\label{intdivs}\end{equation}
Here $BDR^K[u,S,E]$ represents the boundary terms obtained in the 
integration by parts with respect to $t$. Its exact form is not
important, as all we need to use here is the bound 
\begin{equation}
|BDR^K[u,S,E]| \lesssim E[u](0) + E[u](\tv_0)
\label{bdrks}
\end{equation}
We note that due to the presence of the cutoff function $\chi$ in both
operators $S$ and $E$ there are no contributions on the lateral
boundary $r = r_e$.

On the other hand $IQ^K[u,S,E]$ represents a quadratic form
in $(u,D_t u)$ which can be written in the form
\begin{equation}
IQ^K[u,S,E]  = \int_D Q_{2}^w u \cdot \overline u + 2\Re
Q_1^w u \cdot \overline {D_t u} + Q_0^w {D_t u} \overline {D_t u} \
dV_K
\label{IQrep}\end{equation}
where $Q_j^w \in OPS^j$ are selfadjoint pseudodifferential operators
so that
\begin{equation}
Q_2^w  + 2 Q_1^w D_t + Q_0^wD_t^2 = 
\frac12([\Box_K,S] + \Box_K E + E \Box_K)
\label{qjdef}\end{equation}
We remark that for arbitrary operators $S$ and $E$ the 
expression above on the right is in general a third order 
differential operator in $t$. However, the operator $S$ will 
always be chosen so that the coefficient of $D_t^3$ vanishes.
We define the principal symbol of the quadratic form
$IQ^K[u,S,E]$ as 
\[
q^K[S,E] =  q_2 + 2 q_1 \tau + q_0 \tau^2
\]
The previous relation shows that it satisfies
\[
q^K[S,E] = \frac{1}{2i} \{p,s\} + p e  \qquad
\text{mod } S^0 + \tau S^{-1} +\tau^2 S^{-2}
\]

We add \eqref{intdivk} with $a$ times \eqref{intdivs}. The boundary 
terms are estimated by \eqref{bdrposk} and \eqref{bdrks}. Using the
duality between the spaces $c_i L^2$ and $L^2_{c_i}$ we can also
estimate
\[
\left| \int_D f  (X + CK + q + a(S+E)) u dV_K \right|
\lesssim \|f\|_{LEW^*_K} \| u\|_{LEW^1_K} 
\]
Hence in order to prove \eqref{main.est.Kerr} it would suffice to show
that the symbols $s$ and $e$ can be chosen so that
\begin{equation}
  \int_D  Q^K[u,X,q,m]  dV_K   + a  IQ^K[u,S,E] 
\gtrsim \|u\|_{LEW_K^1}^2
\label{mek}\end{equation}
Here we aim to choose $S$ and $E$ uniformly with respect to 
small $a$. In effect, our construction below yields symbols $s$ and
$e$ which are analytic with respect to $a$. We remark that the choice 
of $S$ and $E$ is only important in the region where $r$ is close to
$3M$. Outside this region, $Q^K[u,X,q,m]$ is already
positive definite and the contribution of $ a  IQ^K[u,S,E] $ is
negligible. 

We consider first the expression $Q^K[u,X,q,m]$. Near $r=3M$ this has
the form
\begin{equation}
Q^K[u,X,q,m]  = \sum q^{K,\alpha\beta} \partial_\alpha u
\partial_\beta u  + q^{K,0} u^2
\label{qkform}\end{equation}
where its principal symbol $q^K = q^{K,\alpha\beta} \eta_\alpha
\eta_\beta$ and the lower order coefficient $q^{K,0}$ are given by the
relation
\[
q^K = \frac{1}{2i} \{p,X\} + q p, \qquad q^{K,0} = -  \frac12
\nabla^\alpha \partial_\alpha q
\]
We do not need to exactly compute the above expression in the Kerr
case, but it is useful to perform the computation in the simpler case
of the Schwarzschild space. There we have
\[
p = - \left(1-\frac{2M}r \right)^{-1} \tau^2 + \left(1-\frac{2M}r
\right) \xi^2 + \frac{1}{r^2} \lambda^2, \qquad X = i b(r)
\left(1-\frac{3M}r\right) \xi
\]
where $\lambda$ stands for the spherical Fourier variable.
Hence we obtain
\begin{equation}
\begin{split}
r^2 q^S =&\   \frac{1}{2i} \{r^2 p,X\} +( q - r^{-1} b(r)(r-3M)) (r^2 p) 
\\ =&\  \alpha_S^2(r)  \tau^2 + \beta_S^2(r)  \xi^2 + \tilde q(r)(r^2 p)   
\end{split}
\label{sqss}\end{equation}
where, near  $r = 3M$,
\[
\alpha_S^2(r) =  \frac{ r b(r) (r-3M)^2}{(r-2M)^2},
\]
\[
\quad \beta_S^2(r) = \frac{3M}{r^2} b(r^2 -2Mr) + \left(1-\frac{3M}r\right)(b'(r^2 -2Mr) -b(r-M))
\]
respectively 
\[
\tilde q (r) =  q - r^{-1} b(r)(r-3M).
\]
Here we have used the fact that $b > 0$ to write the first two
coefficients as squares.

For our choice of $q$ and $r$ we know that the relation 
\eqref{posS} holds. This implies that the following 
two inequalities must hold:
\begin{equation}
q^S \gtrsim \xi^2 + (r-3M)^2 (\tau^2 + \lambda^2), \qquad q^{S,0} > 0
\end{equation}
Given the form of $q^S$, the  first relation implies that 
$\tilde q$ is a multiple of $(r-3M)^2$, and that in addition 
there is a smooth function $\nu(r)$ so that
\[
\frac{r^3}{r-2M}  \tilde q = \nu(r) \alpha_S^2(r), \qquad 0 < \nu < 1
\]
This allows us to obtain the following sum of squares representation
for $q^S$:
\begin{equation}
  r^2 q^S =  (1-\nu(r)) \alpha_S^2(r)  \tau^2 + \beta^2(r)  \xi^2 +
  \nu_1(r)\alpha_S^2(r) (\lambda^2
  + (r^2 -2rM) \xi^2), \quad \nu_1 = \frac{r-2M}{r^3} \nu 
\label{sumsqS0}\end{equation}
The symbol $\lambda^2$ of the spherical Laplacian ca also be written
as sums of squares of differential symbols,
\[
\lambda^2 = \lambda_1^2 + \lambda_2^2 + \lambda_3^2 
\]
where in Euclidean coordinates we can write
\[
\{  \lambda_1, \lambda_2,\lambda_3\} = \{ x_i \eta_j -x_j \eta_i, \ i \neq
j\}
\]
\begin{equation}
  r^2 q^S =  (1-\nu(r)) \alpha_S^2(r)  \tau^2 + \beta_S^2(r)  \xi^2 +
  \nu_1(r) \alpha_S^2(r) (\lambda_1^2 +\lambda_2^2 +\lambda_3^2 
  + (r^2 -2rM) \xi^2)
\label{sumsqS}\end{equation}

We return now to the question of finding symbols $s$ and $e$ 
so that the bound \eqref{mek} holds. Near $r = 3M$, the 
principal symbol of the quadratic form on the left in \eqref{mek}
is 
\[
\frac{1}{2i} \{ p, X+as\} + p(q+a e)
\]
In order to prove \eqref{mek} at the very least we would like the
above symbol to be nonnegative, and to satisfy the bound
\[
\frac{1}{2i} \{ p, X+a s \} + p(q+a e)
 \gtrsim c_2^2(\tau -\tau_1)^2 + 
c_1^2(\tau-\tau_2)^2 + \xi^2
\]
However, such a bound would not a-priori suffice since translating it 
to operator bounds would require using the Fefferman-Phong inequality,
which does not hold in general for systems. Hence we prove 
a more precise result, and show that the symbols $s$ and $e$ can be
chosen so that we have a favorable sum of squares representation for
the above expression, which extends the sum of squares 
\eqref{sumsqS} to $a \neq 0$.

\begin{lemma} \label{squaresum}
  Let $a$ be sufficiently small. Then there exist smooth homogeneous
  symbols $s \in S^1_{hom}$, $e \in S^0_{hom}$, also depending
  smoothly in $a$, so that for $r $ close to $3M$ we have sum of
  squares representation
\begin{equation}
\rho^2\left( \frac1{2i} \{ p, X+a s \} + p(q+a e)\right) = \sum_{j = 1}^8 \mu_j^2 
\label{sumsqK}\end{equation}
where $\mu_j \in S^1_{hom}+\tau S^0_{hom}$ 
satisfy the following properties:

(i) The decomposition \eqref{sumsqK} extends the decomposition
\eqref{sumsqS} in the sense that
\[
\begin{split}
(\mu_1,\mu_2,\mu_3,\mu_4,\mu_5,\mu_6) = &\  
((1-\nu)^\frac12 \alpha_S \tau, \beta_S \xi, \nu_1^\frac12  \alpha_S\lambda_1,
 \nu_1^\frac12  \alpha_S\lambda_2, \nu_1^\frac12  \alpha_S\lambda_3,
 \nu_1^\frac12  \alpha_S\xi) \\ &\  \text{mod }   a(S^1_{hom}+\tau
 S^0_{hom}) 
\end{split}
\]
and
\[
(\mu_7,\mu_8) \in \sqrt{a} (S^1_{hom}+\tau S^0_{hom})
\]

(ii) The family of symbols $\{ \mu_j\}_{j=1,6}$ is elliptically
equivalent with the family of symbols
$(c_2(\tau-\tau_1),c_1(\tau-\tau_2), \xi)$ in the sense that we have a
representation of the form
\[
\mu = Mv, \qquad 
v = \left( \begin{array}{c}
c_2(\tau-\tau_1) \cr c_1(\tau-\tau_2) \cr \xi \end{array} \right) 
\]
where the symbol valued matrix $M \in M^{8\times 3}(S^{0}_{hom})$
has maximum rank $3$ everywhere.
\end{lemma}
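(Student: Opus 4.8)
The plan is to construct $s$ and $e$ by a perturbative argument around the Schwarzschild case $a=0$, where the representation \eqref{sumsqS} already supplies six squares $\mu_1,\dots,\mu_6$ and $s=e=0$. Writing $p_a$ for the Kerr principal symbol and noting that by \eqref{aproxmet} we have $p_a = p_0 + a\, p^{(1)} + O(a^2)$ with $p^{(1)}$ a smooth homogeneous symbol supported near $r=3M$, I would expand everything in powers of $a$. The target quantity
\[
Q_a := \rho^2\Bigl(\tfrac1{2i}\{p_a, X + a s\} + p_a(q + a e)\Bigr)
\]
becomes $Q_a = Q_0 + a\,\bigl(\tfrac1{2i}\{p_0,s\} + p_0 e + R^{(1)}\bigr) + O(a^2)$, where $R^{(1)} = \tfrac1{2i}\{p^{(1)},X\} + q\,p^{(1)} + \rho^2\text{-corrections}$ collects the first-order-in-$a$ terms coming from the metric and measure perturbation. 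The first key step is to solve, at order $a$, the \emph{transport equation} $\tfrac1{2i}\{p_0, s\} + p_0 e = -R^{(1)} + (\text{sum of squares correction})$: since $p_0$ vanishes simple-transversally on the characteristic set near $r=3M$ and $X$'s symbol $ib(r)(1-3M/r)\xi$ vanishes on the trapped set, one has the freedom to choose $s_1,s_0,e_0,e_{-1}$ fiberwise along the $\xi$-flow to absorb $R^{(1)}$ modulo a perfect square; the factorization $p_0 = g^{tt}(\tau-\tau_1)(\tau-\tau_2)$ makes the $p_0 e$ term give us exactly a multiple of $(\tau-\tau_1)(\tau-\tau_2)$, which is what allows the weights $c_1,c_2$ to appear.

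The second step is the sum-of-squares bookkeeping. At $a=0$ the six squares $(\mu_1,\dots,\mu_6)$ in \eqref{sumsqS} are, up to the elliptic factors $\alpha_S,\beta_S,\nu_1^{1/2}$, precisely $(\tau, \xi, \lambda_1,\lambda_2,\lambda_3,\xi)$, and since $\alpha_S$ vanishes to exactly second order at $r=3M$ while $\beta_S$ and $\nu_1$ are elliptic there, one checks that the tuple $(\alpha_S\tau,\alpha_S\lambda_i,\beta_S\xi)$ is elliptically equivalent to $(c_2(\tau-\tau_1),c_1(\tau-\tau_2),\xi)$ when $a=0$ — because $c_i|_{a=0} = r-3M$ and $\alpha_S \sim (r-3M)$. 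For $a\neq 0$ small, $c_i(\tau,\Phi) = r - 3M - aF(\cdots)$, so the perturbed $\mu_j$'s must be deformed to track $c_1,c_2$ rather than $r-3M$; I would do this by writing $\mu_j = \mu_j^{(0)} + a\,\mu_j^{(1)} + \cdots$ and choosing the corrections so that (i) holds, then invoke a quantitative implicit-function / Cholesky-type argument: a smooth family of positive semidefinite symbol matrices of constant rank admits a smooth square-root factorization $M v$ with $M$ of maximal rank, and the two extra slots $\mu_7,\mu_8 \in \sqrt a(S^1+\tau S^0)$ are there to take up the positive-definite-in-the-elliptic-directions remainder of size $O(a)$ that cannot be folded into the first six.

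The main obstacle I expect is \emph{property (ii)} together with the smooth-in-$a$ square-root extraction. Getting a nonnegative symbol is relatively soft (solve the transport equation and add a large multiple of an elliptic square in the $\xi,\lambda$ directions), but arranging that the resulting $8$-tuple factors \emph{smoothly} through the specific $3$-vector $(c_2(\tau-\tau_1), c_1(\tau-\tau_2),\xi)$ with a maximal-rank matrix $M$ — rather than merely being bounded below by $c_2^2(\tau-\tau_1)^2 + c_1^2(\tau-\tau_2)^2 + \xi^2$ — is the delicate point, since it is exactly what lets us bypass Fefferman–Phong for systems and instead use the (vector-valued, hence legitimate) Cauchy–Schwarz/square bound at the operator level. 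The degeneracy of $\alpha_S$ at $r=3M$ has to match the degeneracy of the $c_i$'s uniformly in $a$: one must verify that the zero sets of $c_1,c_2$ and the relevant components of the $\mu_j$'s coincide to infinite order in $(r-r_a)$, which is where the explicit form $r_a(\tau,\Phi) = 3M + aF(a/M,\Phi/(M\tau))$ and the homogeneity are used. I would handle this by changing the fiber variable near $r=3M$ from $r-3M$ to $c_1$ (or $c_2$), reducing the $a\neq 0$ factorization to the already-established $a=0$ one by a smooth, $a$-dependent, invertible change of frame, and only then reading off $M$ and the two leftover squares.
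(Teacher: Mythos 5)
Your proposal and the paper's proof are genuinely different routes, but yours has a gap that prevents it from closing, and the place where it fails is precisely the place you flag as the delicate point.

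The paper does not expand in powers of $a$ nor solve a transport equation. Instead it defines the ``ideal'' (non-polynomial-in-$\tau$) symbol $\ts = ir^{-1}b(r)(r-r_0(\tau,\Phi))\xi$, whose vanishing tracks the $a$-deformed trapped set exactly, and computes $\{\rho^2 p,\ts\}$ \emph{exactly} using the Hamiltonian identity \eqref{xidota}: on $\{p=0\}$ this Poisson bracket is $\alpha^2(r-r_0)^2\tau^2 + \beta^2\xi^2$. It then applies the Malgrange preparation theorem to strip $\ts$ down to a first-order polynomial in $\tau$ plus a multiple of $p$, which yields $s$, divides the resulting polynomial-in-$\tau$ Poisson bracket by $p = -g^{tt}(\tau-\tau_1)(\tau-\tau_2)$ to produce $\gamma_2+\gamma_1\tau$ plus an explicit multiple of $(\tau-\tau_1)(\tau-\tau_2)$, reads off $\gamma_1,\gamma_2$ by evaluating on the two sheets $\tau=\tau_i$, and finishes with \emph{explicit} algebraic sum-of-squares identities in $\alpha_1,\alpha_2,\beta_1,\beta_2$ (with a large constant $C$ inserted to keep the $\xi^2$ coefficients positive). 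The $\mu_j$ are then written out by hand, and smoothness in $a$ is manifest because every step is an explicit algebraic operation on symbols that are smooth in $a$.

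The gap in your proposal is the smooth-square-root step. You invoke the statement that a smooth family of positive semidefinite symbol matrices of \emph{constant rank} admits a smooth square-root factorization of maximal rank. But the quadratic form $\rho^2\bigl(\frac1{2i}\{p,X+as\}+p(q+ae)\bigr)$, viewed as a quadratic form in the fiber variables, is \emph{designed} to degenerate on the trapped set $\{\xi=0,\ r=r_a(\tau,\Phi)\}$ — that is the entire point of the construction — so its rank is not constant; it drops on a codimension-two subcone. A smooth family of PSD matrices with variable rank need not admit a smooth square root (already $\mathrm{diag}(x^2,y^2)$ fails). Consequently, the Cholesky/implicit-function argument you lean on does not apply, and the ``change fiber variable from $r-3M$ to $c_1$'' reduction is too vague to repair it: the degeneration is along two different $a$-dependent cones $\tau=\tau_1$ and $\tau=\tau_2$, and getting a factorization that is simultaneously smooth, positive, and of the specific form $M\,(c_2(\tau-\tau_1),\,c_1(\tau-\tau_2),\,\xi)^T$ with $M$ of full rank is exactly the nontrivial algebra the paper carries out explicitly in \eqref{firstsq} and the subsequent display. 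Your proposal correctly identifies that this factorization — rather than a mere lower bound, which would require Fefferman–Phong — is the essential point, but it does not supply it; the paper's explicit two-sheet evaluation and algebraic SOS identity is the missing idea.
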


\begin{proof}
  Setting $\tilde q = q - 2 \{ \ln \rho,X\}$ respectively $\tilde e =
  e - 2 \{ \ln \rho,s\}$ we compute
\[
\rho^2\left( \frac1{2i} \{ p, X+a s \} + (q+a e) p \right) = 
\frac1{2i} \{ \rho^2 p, X+a s \} + (\tilde q+a \tilde e)(\rho^2 p)
\]
We first choose the symbol $s$ so that the Poisson bracket $\{\rho^2
p, X+a s \}$ has the correct behavior on the characteristic set $p =
0$.  Recall that the symbol of $X$ is $i r^{-1} b(r) (r-3M) \xi$,
where the vanishing coefficient at $3M$ corresponds exactly to the
location of the trapped rays. Its natural counterpart in the Kerr
space-time is the symbol
\[
 \ts(r,\tau,\xi,\Phi)=i r^{-1}b(r)(r-r_0(\tau,\Phi))\xi.
\]
This coincides with $X$ in the Schwarzschild case $a=0$, and it is well
defined and smooth in $a$ for $r$ near $3M$ and $|\Phi| < 4|\tau|$. In
particular it is well defined in a neighborhood of the characteristic
set $p =0$, which is all we use in the sequel.

We use \eqref{xidota} to compute the Poisson bracket
\[
\begin{split}
\frac{1}{i}  \{ \rho^2 p,\ts\} =&\ -  (\rho^2 p)_r r^{-1} b(r)(r-r_0(\tau,\Phi))+  
\xi (\rho^2 p)_\xi \partial_{r}\left(r^{-1} b(r)(r-r_0(\tau,\Phi))\right) 
\\
 = &\ 2r^{-1} b(r) R(r,\tau,\Phi)\Delta^{-2}(r-r_0(\tau,\Phi))
\\ &\ +
 \left[2\Delta\partial_{r}\left(r^{-1} b(r)(r-r_0(\tau,\Phi))\right) - 2(r-M)
  r^{-1} b(r) (r-r_0(\tau,\Phi))  \right] \xi^2 
\end{split}
\]
Since $r_0(\tau,\Phi)$ is the unique zero of $R(r,\tau,\Phi)$ near
$r=3M$ and is close to $3M$, it follows that we can write
\begin{equation}
\frac{1}{2i} \{ \rho^2 p,\ts\} = 
\alpha^2(r,\tau,\Phi) \tau^2 (r-r_0(\tau,\Phi))^2 +
\beta^2(r,\tau,\Phi)\xi^2  \qquad \text{on} \ \ \{p = 0\}
\label{pbs}\end{equation}
where $\alpha, \beta \in S^0_{hom} $ are positive
symbols. We note that in the Schwarzschild case the symbols
$\alpha$ and  $\beta$ are simply functions of $r$, see 
the first two terms in \eqref{sqss}.

  Unfortunately $\ts$ is not a polynomial in $\tau$, which limits
its direct usefulness. To remedy that we first note that 
\[
\ts - (i r^{-1} b(r) (r-3M) \xi) \in a S^1_{hom}
\]
Hence by (the simplest form of)
the Malgrange preparation theorem we can write
\[
\frac{1}i \ts = (r-3M)b(r)\xi  +  a (s_1(r,\xi, \theta,\Theta,\Phi)+
s_0(r,\xi, \theta,\Theta,\Phi) \tau)+  
a  \gamma(\tau, r,\xi, \theta,\Theta,\Phi) p
\]
with $s_1 \in S^1_{hom}$, $s_0 \in S^0_{hom}$ and $\gamma \in
S^{-1}_{hom}$.  Then we define the desired symbol $s$ by
\[
s = s_1 + s_0 \tau
\]
The Poisson bracket $\frac{1}i \{\rho^2 p, s\}$ is a third degree polynomial
in $\tau$. Hence, after division by $p =
-g^{tt}(\tau-\tau_1)(\tau-\tau_2)$, taking also \eqref{sumsqS} into
account, we can write
\begin{equation}
\frac1{2i} \{ \rho^2 p, X+as\} + \tilde q (\rho^2 p)= 
\gamma_2 + \gamma_1 \tau + [e_S+ a(e_0 + e_{-1}
\tau)] (\tau-\tau_1)(\tau-\tau_2).
\label{divbyp}\end{equation}
where, by   \eqref{sumsqS}, the coefficient 
 $e_S$ corresponding  to the Schwarzschild case is given by 
\[
e_S = (1-\nu(r)) \alpha^2(r).
\]

It remains to show that the right hand side of \eqref{divbyp} can be
expressed as a sum of squares as in the lemma modulo an error
$a(S^0_{hom} + \tau S^{-1}_{hom}) p$.  Note that the symbols $e_0$ and
$e_{-1}$ play no role in this, as they can be included in the error.

The coefficients $\gamma_1$ and $\gamma_2$ can be computed using the
relation \eqref{pbs} and the fact that $\{\rho^2 p, X+as\}=\{\rho^2 p, \ts\}$ on
$p=0$ (i.e.  when $\tau=\tau_i$). We denote
\[
\alpha_{i} =\frac{2|\tau_{i}|}{\tau_{1} -\tau_{2}}
 \alpha(r,\tau_{i},\Phi) (r - r_0(\tau_{i},\Phi)) \in S^0_{hom},
\qquad \beta_{i} = \beta(r,\tau_{i},\Phi),
\] 
observing that $\alpha_i$ can be used as substitutes for the $c_i$'s
in the lemma since they are  elliptic multiples of $c_i$.
Then we have
\[
\frac{1}{2i} \{\rho^2 p, \ts\}(\tau_i) = \frac14 
\alpha^2_i (\tau_1-\tau_2)^2
+ \beta_i^2 \xi^2
\]
which gives the following expressions for $\gamma_1$, $\gamma_2$:
\begin{equation}
 \gamma_2=\frac14(\tau_1 -\tau_2) (\alpha_2^2 \tau_1- \alpha_{1}^2\tau_2  )
 + \frac{\tau_1 \beta_2^2 -\tau_2 \beta_1^2}{\tau_1-\tau_2} \xi^2,
\quad
 \gamma_1=\frac14(\tau_1-\tau_2)(\alpha_{1}^2 -
\alpha_{2}^2)
+ \frac{\beta_{1}^2 -\beta_{2}^2}{\tau_1-\tau_2} \xi^2
\label{gamma}\end{equation}

We use the first components of $\gamma_1$ and $\gamma_2$ 
to obtain a sum of squares as follows:
\begin{equation}
\begin{split}
\!\!\!\!(\tau_1 -\tau_2) (\alpha_2^2 \tau_1- \alpha_{1}^2\tau_2  ) + \tau (\tau_1-\tau_2)(\alpha_{1}^2 -
\alpha_{2}^2) &= 
\nu (\alpha_1 (\tau-\tau_2) -\alpha_2(\tau-\tau_1))^2 \\ &\ +
(1-\nu) (\alpha_1 (\tau-\tau_2) +\alpha_2(\tau-\tau_1))^2 
\\ & \ -4 e_K (\tau-\tau_1)(\tau-\tau_2) 
\end{split}
\label{firstsq}\end{equation}
where
\[
 e_K =  \frac{(\alpha_1-\alpha_2)^2}4 + (1-\nu) \alpha_1 \alpha_2
\]
We remark that in  the Schwarzschild case we have  $\tau_2 =
-\tau_1$  and also  $\alpha_1 = \alpha_2=\alpha_S$ and
$\beta_1=\beta_2=\beta_S$. In particular this shows 
that
\[
e_K -e_S \in a(S^0_{hom} + \tau S^{-1}_{hom})
\]
which accounts for the $e_S$ factor in \eqref{divbyp}.  It remains to
consider the $\xi^2$ terms in \eqref{gamma}. This is easier since the
coefficients $\beta_1$, $\beta_2$ are positive and have a small
difference $\beta_1-\beta_2 \in a S^{0}_{hom}$.  Precisely, for a
large $C$ we can write
\[
\begin{split}
\frac{\tau_1 \beta_2^2 -\tau_2 \beta_1^2}{\tau_1-\tau_2}  + 
\tau  \frac{\beta_{1}^2 -\beta_{2}^2}{\tau_1-\tau_2}  = &\ 
\frac12 (\beta_1^2 + \beta_2^2 - Ca) +  
\frac{(Ca - \beta_2^2+\beta_1^2) (\tau-\tau_2)^2}
{2(\tau_1-\tau_2)^2}
\\ &\ +\frac{ (Ca-\beta_1^2+\beta_2^2) (\tau-\tau_1)^2}{2(\tau_1-\tau_2)^2}
+ O(a) p
\end{split}
\]
Summing this with \eqref{firstsq} we obtain the desired sums of
squares representation,
\[
\begin{split}
 \frac{1}{2 i} \{\rho^2 p, X+a s \} + (\rho^2 p) \tilde q \in &\ \frac{\nu}4 (\alpha_1 (\tau-\tau_2)
  -\alpha_2(\tau-\tau_1))^2  \\ &\  + \frac{1-\nu}4 (\alpha_1 (\tau-\tau_2)
  +\alpha_2(\tau-\tau_1))^2 + \frac12 (\beta_1^2 + \beta_2^2 - Ca) \xi^2
 \\ &\ + \frac{(Ca - \beta_2^2+\beta_1^2) (\tau-\tau_2)^2} {2(\tau_1-\tau_2)^2}
  \xi^2  +\frac{ (Ca-\beta_1^2+\beta_2^2)
    (\tau-\tau_1)^2}{2(\tau_1-\tau_2)^2} \xi^2\\ &\ +
  a (S^0_{hom} + S^{-1}_{hom} \tau) (\tau -\tau_1)(\tau-\tau_2)
\end{split}
\]
Then $e$ is chosen so that the last term accounts for the contribution
of $\tilde e$.  

Part (ii) of the lemma directly follows. For part (i)
we still need to specify which are the symbols $\mu_j$. Precisely, we
set
\[
\mu_1^2 = \frac{1-\nu}4 (\alpha_1 (\tau-\tau_2)
  +\alpha_2(\tau-\tau_1))^2, \qquad \mu_2^2 = \frac12 (\beta_1^2 + \beta_2^2 - Ca) \xi^2
\]
\[
\mu_7^2 =  \frac{(Ca - \beta_2^2+\beta_1^2) (\tau-\tau_2)^2} {2(\tau_1-\tau_2)^2}
  \xi^2, \qquad \mu_8^2 = \frac{ (Ca-\beta_1^2+\beta_2^2)
    (\tau-\tau_1)^2}{2(\tau_1-\tau_2)^2} \xi^2 
\]
Finally for $\mu_{3,4,5}$ and $\mu_6$ we set
\[
\mu_{3,4,5}^2 =  \frac{\lambda_{1,2,3}^2}{\lambda^2+(r^2-2rM) \xi^2} \frac{\nu}4 (\alpha_1 (\tau-\tau_2)
  -\alpha_2(\tau-\tau_1))^2, 
\]
respectively
\[
 \mu_6^2 =
  \frac{(r^2-2rM) \xi^2}{\lambda^2+(r^2-2rM) \xi^2}
  \frac{\nu}4 (\alpha_1 (\tau-\tau_2)
  -\alpha_2(\tau-\tau_1))^2
\]
It is easy to see that for $a = 0$ these symbols coincide with the
coresponding Schwarzschild symbols. The proof of the lemma is
concluded.  

\end{proof}

In what follows we use the above lemma to prove the bound \eqref{mek}
and conclude the proof of the theorem. We begin with symbols $s$ and
$e$ as in the lemma. These are homogeneous symbols, and we can make
them smooth by truncating away the low frequencies.  They are only
defined near $r = 3M$, therefore some spatial truncation is also
necessary.  Let $\chi$ be a smooth cutoff function supported near $3M$
which equals $1$ in a neighborhood of $3M$, chosen so that we have a
smooth partition of unity in $r$,
\[
1 = \chi^2(r) + \chi_{o}^2(r)
\]
At first we define the truncated operators 
\[
\tilde S = \chi s^w \chi, \qquad \tilde E =
\chi e^w \chi
\]
This choice would yield an expression $Q^K[u,\tilde S,\tilde E]$ 
with a principal symbol
\[
q^K_{princ}[\tilde S,\tilde E] = \chi^2 \left( \frac{1}{2i} \{p,s\} + pe
\right) + \frac{1}{i} \chi s \{p,\chi\}
\]

For these choices of $\tilde S$ and $\tilde E$ we consider the
expression $IQ^K[u,\tilde S,\tilde E]$ which is given by \eqref{IQrep}
with $Q_2^w$, $Q_1^w$ and $Q^w_0$ as in \eqref{qjdef}.  We observe
that in general we can only say that the right hand side of
\eqref{qjdef} is of the form
\[
\frac12([\Box_K,\tilde S] + \Box_K \tilde E + \tilde E \Box_K) = 
Q_2^w  + 2 Q_1^w D_t + Q_0^wD_t^2 + Q_{-1}^w D_t^3, \qquad Q_j^w \in 
OPS^j
\]
However, its principal symbol $q^K_{princ}[\tilde S,\tilde E]$  is at most a second 
order polynomial in $\tau$. Hence by the Weyl calculus we can write 
\[
\frac12([\Box_K,\tilde S] + \Box_K \tilde E + \tilde E \Box_K)
- (q^K_{princ}[\tilde S,\tilde E])^w \in  OPS^0 + OPS^{-1}
D_t + OPS^{-2} D_t^2 + OPS^{-3} D_t^3
\]
In particular this shows that $ Q_{-1}^w \in  OPS^{-3}$. 
To eliminate this term we slightly adjust our choice of $\tilde E$
to
\[
\tilde E = \chi e^w \chi - e^w_{aux} D_t
\]
where the operator $e^w_{aux}$ is chosen so that
\[
g^{tt} e^w_{aux} +  e^w_{aux}g^{tt} =  Q_{-1}^w 
\]
This is possible since the coefficient $g^{tt}$ of $\tau^2$ in $p$ is
a scalar function which is nonzero near $r = 3M$. Also as defined
$e^w_{aux} \in OPS^{-3}$ and has kernel supported near $r = 3M$.

Having insured that the $D_t^3$ term does not appear, we
 divide $IQ^K[u,\tilde S,\tilde E]$ into two parts,
\[
IQ^K[u,\tilde S,\tilde E] = IQ^K_{princ}[u,\tilde S,\tilde E]
+  IQ^K_{aux}[u,\tilde S,\tilde E]
\]
where the main component is given by 
\begin{equation}
IQ^K_{princ}[u,\tilde S,\tilde E]  = \int_D Q_{2,p}^w
  u \cdot \overline u + 2\Re Q_{1,p}^w u \cdot \overline {D_t u} +
Q_{0,p}^w {D_t u} \overline {D_t u} \ dV_K
\label{qksform}\end{equation}
with operators $Q_{2,p}^w$, $Q_{1,p}^w$ and $Q_{0,p}^w$ defined by
\[
Q_{2,p}^w + 2 Q_{1,p}^w D_t + Q_{0,p}^w
 D_t^2 = \chi \left( \frac{1}{2i}
  \{p,s\} + pe \right)^w \chi
\]
while the remainder is given by a similar expression with operators
$Q_{2,a}^w$, $Q_{1,a}^w $ and $Q_{0,a}^w$ whose principal symbols are
supported away from $r = 3M$. More precisely, we have 
\[
Q_{2,a}^w + 2 Q_{1,a}^w D_t + Q_{0,a}^w D_t^2  - 
\left( \frac{1}{i} \chi s \{p,\chi\}\right)^w \in  OPS^0 + OPS^{-1}
D_t + OPS^{-2} D_t^2
\]
Hence, using the fact that the $LEW^1_K$ norm is nondegenerate outside
an $O(a)$ neighborhood of $3M$ we can bound in an elliptic fashion
\begin{equation}
|IQ^K_{aux}[u,\tilde S,\tilde E]| \lesssim \| u\|_{LEW^1_K}^2
+ \|D_t u\|_{H^{-1}_{comp}}^2
\label{iqaux}\end{equation}
where the last term on the right represents the $H^{-1}$ norm of $D_t$
$u$ in a compact region in $r$ (precisely, a neighborhood of $3M$).

In order to conclude the proof of the theorem we turn our attention to
the bound \eqref{mek}, which we seek to establish with $S$ and $E$
replaced with $\tilde S$, respectively $\tilde E$. We will show that
\begin{equation}
  \int_D  Q^K[u,X,q,m]  dV_K   + a  IQ^K_{princ}[u,\tilde S,\tilde E] 
\gtrsim \|u\|_{LEW_K^1}^2  -  O(a) \|D_t u\|_{H^{-1}_{comp}}^2
\label{meka}\end{equation}
We decompose the left hand side of \eqref{meka} into an outer part and
an inner part,
\[
LHS\eqref{meka} =LHS\eqref{meka}_{out} + LHS\eqref{meka}_{in}
\]
where
\[
\begin{split}
LHS\eqref{meka}_{out}= &\ \int_D \chi_o^2 Q^K[u,X,q,m] dV_k \\
LHS\eqref{meka}_{in}= &\  
 \int_D \chi^2 Q^K[u,X,q,m] dV_k + a I Q_{princ}^K[u,\tilde S,\tilde E] 
\end{split}
\]
For the first part we use the pointwise positivity of $Q^K$ away from 
$3M$ (see \eqref{qkbd}) to conclude that 
\begin{equation}
LHS\eqref{meka}_{out}  \gtrsim \int_D 
\chi_o^2 ( r^{-2} |\nabla u|^2 + r^{-4}|u|^2) d V_K
\label{mekout}\end{equation}
The second part is a quadratic form which for convenience we fully
recall here (see \eqref{qkform} and \eqref{qksform}:
\[
\begin{split}
LHS\eqref{meka}_{in} = &\ \int_D \chi^2 ( q^{K,\alpha \beta}
\partial_{\alpha} u \partial_\beta u + q^{K,0} u^2) dV^K
\\ &\ + a \int_D Q_{2,p}^w
  u \cdot \overline u + 2\Re Q_{1,p}^w u \cdot \overline {D_t u} +
Q_{0,p}^w {D_t u} \overline {D_t u} \ dV_K
\end{split}
\]
where the coefficients $q^{K,\alpha \beta}$, $q^{K,0}$ and 
operators $Q_{j,p}^w$ satisfy
\[
q^{K,\alpha \beta} \eta_\alpha \eta_\beta = \frac{1}{2i}\{ p,X\}+ q p, \qquad
q^{K,0} > 0
\]
respectively 
\[
Q_{2,p}^w + 2 Q_{1,p}^w D_t + Q_{0,p}^w D_t^2 = \chi \left( \frac{1}{2i}
  \{p,s\} + pe \right)^w \chi
\]
We carefully observe that in the two parts of the expression for
$LHS\eqref{meka}_{in}$ the cutoff function $\chi$ appears in different
places. In the first part, it is applied before the differentiation,
while in the second part it is applied before the pseudodifferential
operator. It does not make much sense to commute at this point. In the
first part, we would produce lower order terms which may significantly
alter $q^{K,0}$. In the second part, we would lose the compact
support of the kernels for the operators $Q_{j,p}^w$.

Since $s$ and $e$ are chosen as in Lemma~\ref{squaresum}, it follows
that the principal symbol for  $LHS\eqref{meka}_{in}$ admits the sum of
squares representation \eqref{sumsqK}. We want to translate this into
a sum of squares decomposition for  $LHS\eqref{meka}_{in}$.
However, some care is required due to the different positions of the
cutoff $\chi$, as explained above. The symbols $\mu_k= \mu_k(a)$ are in 
general of pseudodifferential type. However, part (i) of the Lemma
guarantees that in the Schwarzschild case they are of differential
type. Consequently, we write
\[
\mu_k(a) = \mu_k(0) + \mu_k(a) -\mu_k(0)
\]
and use this decomposition to define the pseudodifferential operators
\[
M_k = \chi \mu_k(0)(x,D) + (\mu_k(a) -\mu_k(0))^w \chi 
\]
Then using the Weyl calculus it follows that for $LHS\eqref{meka}_{in}$
we have the representation
\[
LHS\eqref{meka}_{in} = \int_D \sum_k |M_k u|^2 + q^{K,0} \chi^2 u^2 d
V_k + \int_{D} R_2^w u \cdot \bar{u} + 2 \Re R_{1}^w D_t u \cdot \bar
u + R_0^w D_t u \cdot \overline{D_t u} dV_K
\]
where the remainder terms satisfy $r_j \in a S^{j-2}$. What is
important here is that the remainder is of size $O(a)$. This follows
from our choice of the operators $M_k$, which guarantees that 
when $a = 0$ the remainder is zero.

Combining the last relation with \eqref{mekout} we obtain the bound
\[
\int_D \chi_o^2  r^{-2} |\nabla u|^2 + r^{-4} |u|^2  
+ \sum_k |M_k u|^2 dV_K \lesssim LHS\eqref{meka} + 
a(\| u\|_{L^2_{comp}}^2 + \|D_t u\|_{H^{-1}_{comp}}^2)
\]
where the last two terms on the right account for the remainder terms
involving the operators $R_j^w$, which can be bounded using norms 
of $u$ and $D_t u$ in a compact region in $r$, away from $r = 0$
and $r = \infty$. 

It is easy to see that the above left hand side dominates
$\|u\|_{LEW_K^1}$. For $r$ away one uses only the first two terms.  On
the other hand for $r$ close to $3M$ we use part (iii) of the Lemma,
which guarantees that the symbols $c_1(\tau-\tau_2)$,
$c_2(\tau-\tau_1)$ and $\xi$ can be recovered in an elliptic fashion
from the principal symbols $\mu_k$ of $M_k$. Thus \eqref{meka}
is proved. Together with \eqref{iqaux} this shows that
\[
\|u\|_{LEW_K^1}^2 \lesssim   
\int_D  Q^K[u,X,q,m]  dV_K   + a  IQ^K[u,\tilde S,\tilde E] 
+  O(a) \|D_t u\|_{H^{-1}_{comp}}^2
\]
The final step in the proof of \eqref{mek} is to establish that the
last  error term above is negligible. 
  We can  account for
it in an elliptic manner. Precisely, for any compactly supported
selfadjoint operator $Q \in OPS^{-1}$ we can use $Q^2$ in 
a Lagrangian term and integrate by parts
(commute) to obtain  
\[
\begin{split}
2 \Re \int_D (g^{tt})^{-1} \Box_K u\cdot  \overline{Q^2 u}\  dV_K =  
\|Q D_t u\|_{L^2}^2 + O( & \ \| Q D_t u\|_{L^2}\|u\|_{L^2_{comp}}^2
+  \|u\|_{L^2_{comp}}^2 \\ &\ + E[u](0)+ E[u](\tv_0))
\end{split}
\]
which leads to the elliptic bound 
\[
\|Q D_t u\|_{L^2}^2 \lesssim  \|u\|_{L^2_{comp}}^2 + \| \Box_K
u\|_{H^{-1}_{comp}}^2 + E[u](0)+ E[u](\tv_0)
\]
and further to
\[
\|D_t u\|_{H^{-1}_{comp}}^2 \lesssim  \|u\|_{L^2_{comp}}^2 + \| \Box_K
u\|_{H^{-1}_{comp}}^2 + E[u](0)+ E[u](\tv_0)
\]
Thus \eqref{mek} follows, and the  proof of the theorem is concluded.
\end{proof}

 Note that Theorem \ref{Kerr} tells us, in particular, that if we
 start with an initial data $(u_0, u_1)\in H^1\times L^2$ then
 $u(\tv)\in H^1$ is uniformly bounded for all $\tv>0$. 
A natural question to ask is if  this is also true for higher $H^n$
norms.  For $n \geq 1$ we define
\[
\|u\|_{LE^{n+1}_K} = \sum_{|\alpha| \leq n} \| \partial^\alpha
u\|_{LE^1_K}
\] 
respectively
\[
 \|f\|_{LE^{n*}_K} =  \sum_{|\alpha| \leq n} \|\partial^{\alpha}f\|_{LE^*_K}
 \]
The higher order energies are similarly defined,
\[
  E^{n+1}[u](\Sigma_R^{\pm}) =    
\sum_{|\alpha| \leq n} E[\partial^\alpha u](\Sigma_R^{\pm}), \qquad
  E^{n+1}[u]({\tv_0}) =    
\sum_{|\alpha| \leq n} E[\partial^\alpha u](\tv_0) 
\]

  We then have the following
 \begin{theorem}\label{highreg}
 Let $n$ be a positive integer and $u$ satisfy $\Box_K u=f$ with
 initial data $(u_0, u_1)\in H^{n+1} \times H^{n}$ on $\Sigma_R^-$
and $f\in LE_K^{n*}(\M_R)$. Then
 \[
 E^{n+1}[u](\Sigma_R^{+}) 
+ \sup_{\tv > 0}  E^{n+1}[u]({\tv_0}) 
+ \|u\|_{LE_K^{n+1}}^2 \lesssim  \|u_0\|_{H^{n+1}}^2 + \|u_1\|_{H^{n}}^2 +
 \|f\|_{LE^{n*}_K}^2
 \]
 \end{theorem}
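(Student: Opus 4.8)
The plan is to prove Theorem~\ref{highreg} by induction on $n$, using Theorem~\ref{Kerr} as the base case ($n=0$ corresponds to the $H^1 \times L^2$ estimate already established) and commuting derivatives through the equation. The key structural fact is that the Kerr metric coefficients in the $(\tv, r, \tphi, \theta)$ coordinates are smooth in $\M_R$ up to the lateral boundary $r = r_e$, with bounded derivatives of all orders, and asymptotically flat as $r \to \infty$ according to \eqref{gm}. Therefore, for any spatial or time derivative $\partial$ among $\partial_\tv, \partial_r, \partial_{\tphi}, \partial_\theta$ (or, more invariantly, for vector fields tangent to the $r$-slices together with $\partial_r$), the commutator $[\Box_K, \partial]$ is a second-order operator whose coefficients are again bounded with all derivatives and which enjoys the same decay at infinity. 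Applying $\partial^\alpha$ with $|\alpha| \le n$ to the equation $\Box_K u = f$ yields
\[
\Box_K (\partial^\alpha u) = \partial^\alpha f + \sum_{|\beta| < |\alpha|} c_{\alpha\beta}(x) \partial^2 (\partial^\beta u) =: f_\alpha,
\]
where each $c_{\alpha\beta}$ satisfies the analogue of \eqref{aproxmet}, i.e. the commutator terms are supported with the right weights and in particular are $O(1)$ relative to the Schwarzschild background in the exterior region.

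The main obstacle, and the place where care is genuinely required, is controlling the inhomogeneous term $f_\alpha$ in the $LE_K^*$ norm, rather than just $L^2_t L^2_x$, because $LE_K^*$ is \emph{not} simply dual to an $L^2$-based norm near $r = 3M$: it involves the microlocal weights $c_i L^2$ on the trapped set. The commutator terms $c_{\alpha\beta} \partial^2(\partial^\beta u)$ involve two derivatives of a lower-order quantity, whereas $\|\partial^\beta u\|_{LE_K^1}$ only controls \emph{one} derivative (and in a degenerate fashion at $r = 3M$). Thus one cannot naively bound $\|f_\alpha\|_{LE_K^*} \lesssim \|u\|_{LE_K^{|\alpha|}}$. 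The resolution is the standard one for trapping estimates: the vector fields used to commute should be chosen tangent to (or commuting with) the trapped set structure so that the worst commutator terms either vanish on the characteristic set $p = 0$ near $r = 3M$ or carry the vanishing factor $c_i$ themselves. Concretely, near $r = 3M$ the metric coefficients are smooth, and one commutes with $\partial_\tv$, $\partial_{\tphi}$, and the spherical rotation vector fields (whose symbols are among the $\lambda_j$ appearing in \eqref{sumsqK}) — these produce lower-order terms that are already controlled by $LE_K^1$ — together with $\partial_r$; the $\partial_r$ commutator produces a term whose second-order part is $\partial_r^2 u$ paired against the $\partial_r$ component of $LE_K^1$, which is nondegenerate, so it costs only one factor and can be absorbed after choosing the implicit constants appropriately, exactly as in the corresponding step of \cite{MMTT} for the Schwarzschild case.

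With the inhomogeneous term under control, the induction proceeds as follows. First, assume the theorem holds for all integers $< n$. Apply $\partial^\alpha$, $|\alpha| = n$, to the equation to get $\Box_K(\partial^\alpha u) = f_\alpha$ with $f_\alpha = \partial^\alpha f + (\text{commutators})$. By the discussion above, the commutator part splits into (a) terms involving at most one derivative of $\partial^\beta u$, $|\beta| \le n-1$, plus terms supported near $r=3M$ carrying the trapping weight, all bounded by $\|u\|_{LE_K^n}^2$ via the inductive hypothesis, and (b) terms involving $\partial_r^2(\partial^\beta u)$ near $r = 3M$, which are bounded by $\epsilon \|u\|_{LE_K^{n+1}}^2$ (since $\partial_r$ is nondegenerate in $LE_K^1$) for a small constant that is then absorbed into the left side. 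Second, apply Theorem~\ref{Kerr} to $\partial^\alpha u$:
\[
E[\partial^\alpha u](\Sigma_R^+) + \sup_{\tv} E[\partial^\alpha u](\tv) + \|\partial^\alpha u\|_{LE_K^1}^2 \lesssim E[\partial^\alpha u](\Sigma_R^-) + \|f_\alpha\|_{LE_K^*}^2.
\]
Third, sum over $|\alpha| \le n$, use that the initial data term $\sum_{|\alpha|\le n} E[\partial^\alpha u](\Sigma_R^-)$ is controlled by $\|u_0\|_{H^{n+1}}^2 + \|u_1\|_{H^n}^2$ — here one uses the equation on the initial slice $\Sigma_R^-$ to trade time derivatives $\partial_\tv^k u(0)$ for spatial derivatives of $u_0, u_1$, which is legitimate since $\Sigma_R^-$ is spacelike and $\partial_\tv$ is transversal — and absorb the small error terms of type (b) into the left-hand side. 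Finally, for the transition from the weaker compactly-supported-weight version to the full $LE_K^{n+1}$ norm including the correct decay at infinity, one uses, exactly as in the $n=0$ case and as in \cite{MMTT}, that for large $r$ the Kerr metric is a small long-range perturbation of Minkowski satisfying \eqref{gm}, so the higher-order exterior estimate follows from the corresponding result in \cite{MT} applied to each $\partial^\alpha u$; the commutators at infinity are harmless because of the decay in \eqref{aproxmet}. This closes the induction.
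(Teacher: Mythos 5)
Your proposal correctly identifies the high-level strategy (induction, commute with the equation, base case from Theorem~\ref{Kerr}, use the equation on $\Sigma_R^-$ to control higher time derivatives of the data, perturbation of Minkowski at infinity), but the central step---handling the commutator $[\Box_K,\partial_r]$---is not sound as stated, and it misses the two mechanisms the paper actually uses.

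The concrete problem with the absorption argument: the commutator $[\Box_K,\partial_r] u$ contains $-(\partial_r g^{rr})\,\partial_{rr} u$, and the coefficient $\partial_r g^{rr}$ is $O(1)$, not $O(\epsilon)$, both near $r=3M$ and near the event horizon. To close the argument as you propose, you would need $\|\chi\,(\partial_r g^{rr})\,\partial_{rr} u\|_{c_1 L^2 + c_2 L^2}$ to be a small multiple of $\|\partial_{rr}u\|_{L^2}$ (this being what $\|\partial_r u\|_{LE_K^1}$ controls near $3M$). That bound fails: for data microlocally concentrated on the trapped set, where both $c_1$ and $c_2$ degenerate simultaneously, the norm $c_1L^2+c_2L^2$ is genuinely weaker (more restrictive) than $L^2$, and one cannot trade $\|\cdot\|_{c_1L^2+c_2L^2}$ for $\epsilon\|\cdot\|_{L^2}$. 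So the key absorption step has no content. Near the horizon there is a separate, equally serious issue: there $L$ is not elliptic and the degeneracy of the local energy norm is not the issue, but the size of $\partial_r g^{rr}$ is; it is bounded away from zero and cannot be absorbed by smallness.

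What the paper does instead is qualitatively different in both regions. In the exterior region $r>2M+\epsilon$ (which includes a full neighborhood of $r=3M$), $\partial_{\tilde v}$ is time-like, so the spatial part $L$ of $\Box_K$ in \eqref{Kerreq} is elliptic. One commutes only with the Killing field $\partial_{\tilde v}$, obtaining control of $\partial_{\tilde v} u$ via Theorem~\ref{Kerr}, and then recovers all other second derivatives from the equation by elliptic regularity for $L$, carried out directly in the local energy spaces. No commutation with $\partial_r$ or with rotations is needed there, and no absorption of $\partial_{rr}$ terms arises. Near the event horizon, where $L$ is not elliptic, the paper uses the red shift: the crucial sign $\partial_r g^{rr}>0$ near $r=2M$ means the commutator term can be moved to the left-hand side as a term with a favorable sign, i.e., one writes $(\Box_K - \gamma[(X+CK)+q])\,\partial_r w = \partial_r g + Tw$ with $\gamma>0$ and shows the modified operator $B=\Box_K-\gamma[(X+CK)+q]$ satisfies the same multiplier identity, because the extra term only improves the divergence inequality. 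Your proposal contains neither of these ideas; both are essential, and without them the induction does not close.
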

 
\begin{proof}
We remark that by trace regularity results we have 
\[
\sum_{|\alpha| \leq n-1} \|\partial^\alpha f\|_{L^2(\Sigma_R^-)} 
\lesssim  \|f\|_{LE_K^{n*}}
\]
Since the initial surface $\Sigma_R^{-}$ is space-like, we can use the
equation to derive all higher $\tv$ derivatives of $u$ in terms of the
Cauchy data $(u_0,u_1)$ and $f$,
\[
 E^{n+1}[u](\Sigma_R^{-}) \lesssim \|u_0\|_{H^{n+1}}^2 + \|u_1\|_{H^{n}}^2 + \|f\|_{LE_K^{n*}}^2 
\]
Thus it suffices to prove that for $\tv_0 > 0$ we have
 \begin{equation}
 E^{n+1}[u](\Sigma_R^{+}) 
+  E^{n+1}[u]({\tv_0}) 
+ \|u\|_{LE^{n+1}_K}^2 \lesssim  E^{n+1}[u](\Sigma_R^{-}) +
 \|f\|_{LE_K^{n*}}^2
 \end{equation}
We will prove this for $n=2$, and the proof for the other cases
 will follow in a similar manner by induction.

 Since $\partial_\tv$ is a Killing vector field, we have $\Box_K
 (\partial_\tv u) = \partial_\tv f$. Then by Theorem \ref{Kerr} we 
 obtain
 \begin{equation}\label{t-est}
 E[\partial_\tv u](\Sigma_R^{+}) 
+  E[\partial_\tv u]({\tv_0}) 
+ \|\partial_\tv u\|_{LE^{1}_K}^2 \lesssim  E^{2}[u](\Sigma_R^{-}) +
 \| f\|_{LE_K^{1*}}^2
 \end{equation}

 In order to control the rest of the second order derivatives we
 take advantage of the equation, which takes the form
\begin{equation}\label{Kerreq}
 (g^{\tv \tv}\partial_{\tv\tv}+2g^{\tv\tphi}\partial_{\tv\tphi}+
L )u = f
\end{equation}
where $L$ is a spatial partial differential operator of order $2$.
This is most useful in the region where $\partial_\tv$ is time-like.
Given $\epsilon > 0$, this happens in the region of the form $r> 2M
+\epsilon$ provided that $a$ is sufficiently small. The fact that
$\partial_\tv$ is time-like is equivalent to the ellipticity of the
spatial part $L$  of $\Box_K$.   From \eqref{t-est} we obtain at $\tv = \tv_0$
\[
\| L u  \|_{L^2(\Sigma_{v_0})}^2
\lesssim E[u](v_0) + E[\partial_{\tv} u](v_0) + \| f\|_{L^2(\Sigma_{v_0})}^2
\]
The operator $L$ on the left is elliptic in $r \geq 2M+\e$, therefore
by a standard elliptic estimate we obtain
\[
E[\nabla u](\Sigma_{\tv_0} \cap \{r > 2M+\epsilon\})
\lesssim E[u](v_0) + E[\partial_{\tv} u](v_0) + \|
f\|_{L^2(\Sigma_{v_0})}^2
\]
A similar elliptic analysis leads to the corresponding local energy
bound,
\[
\begin{split}
  \| \nabla u\|_{LE^1_K(\M_R \cap \{r > 2M+2\epsilon\})}^2 &\ \lesssim
  \|u\|_{LE^1_K(\M_R)}^2 + \|\partial_{\tv} u\|_{LE^1_K(\M_R)}^2 + \|
  f\|_{L^2(\M_R)}^2 \\ &\ + E[\nabla u](\Sigma_{0} \cap \{r >
  2M+\epsilon\}) + E[\nabla u](\Sigma_{\tv_0} \cap \{r > 2M
  +\epsilon\})
\end{split}
\]
where the last two terms account for the output of integrations by
parts in  $\tv$.

We are left to deal with the case $r< 2M +2\e$, where $g^{rr}$ is
small and simply using the equation \eqref{Kerreq} does not suffice.
Let $\zeta(r)$ be a smooth cutoff function such that $\zeta = 1$ on
$[r_e, r_+ +2\e]$ and $\zeta = 0$ when $r> r_+ +3\e$. Then we
need bounds for the function $w = \zeta u$, which solves
\[
\Box_K w = \zeta f + [\Box_K,\zeta] u :=g
\]
The commutator above is supported in the region $\{ 2M + 2\epsilon
\leq r \leq 2M+ 3 \epsilon\}$ where we already have good estimates for
$u$. Recall that in the region $\{ r < 2M+3\epsilon\}$ the $LE^1_K$
and $LE^*_K$ norms are equivalent with the $H^1$, respectively $L^2$
norm. Hence it remains to prove that for all functions $w$ with
support in $\{ r < 2M+3\epsilon\}$ which solve $\Box_K w = g$ we have
\begin{equation}\label{t-estw}
  E[\nabla w](\Sigma_R^{+}) 
  +  E[\nabla w]({\tv_0}) 
  + \|\nabla w \|_{H^1(\M_R)}^2 \lesssim  E^{2}[u](\Sigma_R^{-}) +
  \| g\|_{H^1(\M_R)}^2
\end{equation}
This is an estimate which is localized near the event horizon,
and we will prove it taking advantage of the red shift effect.

Since $\partial_{\tv}$ is Killing, this bound  follows directly from
Theorem~\ref{Kerr} for the $\partial_{\tv} w$ component of $\nabla w$,
\begin{equation}\label{t-estwv}
  E[\partial_\tv w](\Sigma_R^{+}) 
  +  E[\partial_{\tv} w]({\tv_0}) 
  + \|\partial_{\tv} w \|_{H^1(D)}^2 \lesssim  E^{2}[w](\Sigma_R^{-}) +
  \| g\|_{H^1(D)}^2
\end{equation}
Consider now the angular derivatives of $w$, $\partial_{\omega} w$.
 We know that
\[
 [\Box_S, \partial_{\omega}]=0
\]
since the Schwarzschild metric is spherically symmetric. Hence by
\eqref{aproxmet} it follows that $[\Box_K,\partial_{\omega}]$ is a
second order operator whose coefficients have size $O(a)$.  Hence by
Theorem \ref{Kerr} we obtain
\begin{equation}\label{ang-est}
 E[\partial_{\omega} w](\Sigma_R^{+}) 
  +  E[\partial_{\omega} w]({\tv_0}) 
  + \| \partial_\omega w \|_{H^1(D)} \lesssim  E^{2}[w](\Sigma_R^{-}) +
  \| g\|_{H^1(D)}^2 + a \| \nabla_\omega w \|_{H^1 (D)}^2
\end{equation}

 We still need to bound $\partial_{r} w$. For that we 
 compute the commutator 
\[
 [\Box_K, \partial_r]w= -(\partial_r g^{rr})\partial_{rr}w + T w
\]
where $T$ stands for a second order operator with no $\partial_r^2$
terms.  The key observation, which is equivalent to the red shift
effect, is that $\partial_r g^{rr} > 0$ near $r = 2M$. Thus for 
$X$, $C$ and $q$ as in Lemma~\ref{ibp} we can
write the equation for $\partial_r w$ in the form 
\[
(\Box_K - \gamma [(X+CK)+q]) \partial_r w  = \partial_r g + Tw
\]
with $T$ as above and most importantly, a positive coefficient $\gamma$.
Because of this the operator 
\[
 B= \Box_K - \gamma[(X+CK)+q]
\]
satisfies the same estimate in Theorem \ref{Kerr} as $\Box_g$
for functions supported near the event horizon. 
Indeed, the same proof goes through as in Theorem \ref{Schwarz}.
Writing the integral identity  \eqref{intdivk} for $w$ we see 
the contribution of $\gamma$ is negative, therefore  we obtain the
inequality
\[
\int_D Q^K[w,X,q,m] dV_K \leq 
- \int_D  (\partial_r g + Tw)  \Bigl((X+CK)w + q w\Bigr) dV_K
+ BDR^K[w]
\]
By \eqref{qkbd} the left hand side is positive definite for $r <
2M+3\epsilon$. Using Cauchy-Schwarz for the first term on the right
and \eqref{bdrposk} for the second we obtain
\[
 E[\partial_{r} w](\Sigma_R^{+}) 
  +  E[\partial_{r} w]({\tv_0}) 
  + \| \partial_r w \|_{H^1(D)}^2 \lesssim  E^{2}[w](\Sigma_R^{-}) +
  \| \partial_r g + T w \|_{L^2 (D)}^2 
\]
Since $T$ contains no second order $r$ derivatives, this leads to
\begin{equation}\label{r-est}
 E[\partial_{r} w](\Sigma_R^{+}) 
  +  E[\partial_{r} w]({\tv_0}) 
  + \| \partial_r w \|_{H^1(D)}^2 \lesssim  E^{2}[w](\Sigma_R^{-}) +
  \| g\|_{H^1(D)} + a \| \nabla_{\omega,\tv} w \|_{H^1 (D)}^2
\end{equation}
Then the desired bound \eqref{t-estw} follows by combining
\eqref{t-estwv}, \eqref{ang-est} and \eqref{r-est} with appropriate
coefficients.

\end{proof}

  As an easy corollary, one obtains from Sobolev embeddings
 the pointwise boundedness result,
\begin{corr}  
 If $u$ satisfies $\Box_K u=0$ in $\M_R$ with
 initial data $(u_0, u_1)\in H^2\times H^1$ in $\Sigma_R^-$, then
\[
\|u\|_{L^\infty} \lesssim \|u_0\|_{H^2} + \|u_1\|_{H^1}
\] 
\end{corr}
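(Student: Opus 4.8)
\emph{Approach.} The plan is to derive the bound from the higher-order local energy estimate of Theorem~\ref{highreg}, used with $n=1$ and $f=0$, followed by a Sobolev embedding on the $\tv$-slices. That theorem gives, uniformly in $\tv_0>0$,
\[
 E^{2}[u](\tv_0) + \|u\|_{LE^{2}_K}^2 \lesssim \|u_0\|_{H^2}^2 + \|u_1\|_{H^1}^2 .
\]
Fix a slice $\Sigma = \M_R\cap\{\tv=\tv_0\}$ and use the Cartesian-like chart $x=r\omega$, under which $\Sigma$ becomes $\{|x|\ge r_e\}$, the measure $r^2\,dr\,d\omega$ is Lebesgue measure, and $|\partial_r u|^2+|\ang u|^2$ is comparable to $|\nabla_x u|^2$. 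Thus $E^2[u](\tv_0)$ bounds $\nabla_x u(\tv_0)$ in $L^2(\Sigma)$, and likewise $\nabla_x\partial^\beta u(\tv_0)$ in $L^2(\Sigma)$ for $|\beta|\le1$; that is, it controls $u(\tv_0)$ in $\dot H^1(\Sigma)\cap\dot H^2(\Sigma)$, uniformly in $\tv_0$.

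\emph{Controlling $u$ itself.} The energies only see derivatives of $u$, so a separate step is needed to bound $u$ itself in $L^2$ over any fixed bounded region $B$ of the slice (this is what lets us upgrade the homogeneous bounds to a genuine $H^2$ bound near the inner boundary $|x|=r_e$). For this I would combine the spacetime bound $\|u\|_{L^2(\R\times B)}\lesssim\|r^{-1}u\|_{LE_M}\lesssim\|u\|_{LE^2_K}$ with the uniform energy bound $\sup_\tv\|\partial_\tv u(\tv)\|_{L^2(B)}\lesssim C$: writing
\[
 \|u(\tv_0)\|_{L^2(B)}^2 = \|u(\tv_1)\|_{L^2(B)}^2 + 2\int_{\tv_0}^{\tv_1}\!\int_B u\,\partial_\tv u
\]
and averaging $\tv_1$ over a unit interval inside $[\tv_0,\tv_0+2]$ gives $\|u(\tv_0)\|_{L^2(B)}\lesssim C$, uniformly in $\tv_0$. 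Hence $u(\tv_0)\in H^2(B)$ for any fixed bounded $B$, while $u(\tv_0)\in\dot H^1(\Sigma)\cap\dot H^2(\Sigma)$ globally on the slice.

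\emph{Sobolev step.} I would then extend $u(\tv_0)$ across the bounded hole $\{|x|<r_e\}$ by a bounded Sobolev extension (legitimate since $u(\tv_0)$ is controlled in $H^2$ near $\partial B_{r_e}$ and in $\dot H^1\cap\dot H^2$ at infinity), obtaining a function on $\R^3$ with $\|u(\tv_0)\|_{\dot H^1(\R^3)}+\|u(\tv_0)\|_{\dot H^2(\R^3)}\lesssim\|u_0\|_{H^2}+\|u_1\|_{H^1}$, and conclude by the embedding $\dot H^1(\R^3)\cap\dot H^2(\R^3)\hookrightarrow L^\infty(\R^3)$. The latter is an elementary Littlewood--Paley estimate: $\|g\|_{L^\infty}\lesssim\sum_{k\in\Z}2^{3k/2}\|g_k\|_{L^2}$ for the dyadic pieces $g_k$ of $g$, with the $k\le0$ part summed against $\|g\|_{\dot H^1}$ and the $k>0$ part against $\|g\|_{\dot H^2}$. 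This is the role of the (two) ``Sobolev embeddings''.

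\emph{Main point.} Everything here is routine bookkeeping once Theorem~\ref{highreg} is in hand; the only place that needs a little care is obtaining uniformity in $\tv_0$ and as $r\to\infty$ simultaneously. The energies do not control $u$ in $L^2$ of a full slice (a finite-energy wave need not lie in $L^2(\R^3)$), and a naive scaled $H^2\hookrightarrow L^\infty$ bound on a dyadic shell $\{|x|\sim2^j\}$ would degrade like $2^{j/2}$. Working with $\dot H^1\cap\dot H^2$ on the whole slice, together with the averaging argument for the zeroth-order piece near the horizon, removes both difficulties, so no genuine obstacle remains.
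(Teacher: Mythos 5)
Your argument is correct and fills in exactly the steps the paper leaves implicit behind the phrase ``from Sobolev embeddings'': apply Theorem~\ref{highreg} with $n=1$, $f=0$ to get uniform-in-$\tv_0$ control of $\nabla u$ and $\nabla^2 u$ in $L^2$ on each slice, recover the missing zeroth-order piece on compact sets from the $LE$ norm and an averaging in $\tv$, and then invoke $\dot H^1(\R^3)\cap\dot H^2(\R^3)\hookrightarrow L^\infty$ after a Sobolev extension across the inner boundary $r=r_e$. (One tiny slip: the fundamental-theorem identity should carry a minus sign, $\|u(\tv_0)\|_{L^2(B)}^2=\|u(\tv_1)\|_{L^2(B)}^2-2\int_{\tv_0}^{\tv_1}\!\int_B u\,\partial_\tv u$, which is immaterial for the estimate.)
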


\bigskip

\end{document}